\def\cA{\mathcal A}
\def\cC{\mathcal C}
\def\cN{\mathcal N}
\def\FF{{\mathbb F}}
\def\Alt{\mbox{\rm Alt}}
\def\Sym{\mbox{\rm Sym}}
\def\Out{\mbox{\rm Out}}
\def\PGL{\mbox{\rm PGL}}
\def\Im{\mathrm{Im}}
\def\soc{\mathrm{soc}}
\def\sqr{\scalebox{0.4}{$\square$}}
\newcommand{\AGL}{\mbox{\rm AGL}}
\newcommand{\GL}{\mbox{\rm GL}}
\newcommand{\gr}{\rho}
\newcommand{\gl}{\lambda}
\newcommand{\gs}{\sigma}
\newcommand{\gt}{\tau}
\newcommand{\gk}{\kappa}
\newtheorem{theorem}{Theorem}[section]
\newtheorem{definition}[theorem]{Definition}
\newtheorem{lemma}[theorem]{Lemma}
\newtheorem{proposition}[theorem]{Proposition}
\newtheorem{corollary}[theorem]{Corollary}
\newtheorem{remark}[theorem]{Remark}
\newtheorem{example}[theorem]{Example}
\newtheorem{fact}[theorem]{Fact}
\date{}
\title{\Large{\bf On the primitivity of PRESENT\\ and other lightweight ciphers}}
\author{\small{\textsc{Riccardo Aragona, Marco Calderini}}\\
\small{Dipartimento di Matematica, Universit\`a di Trento}\\
\small{via Sommarive 14, 38123, Povo (TN), Italy}\\
\small{E-mail: ric.aragona@gmail.com, marco.calderini@unitn.it}\\
[10pt]
\small{\textsc{Antonio Tortora} and \textsc{Maria Tota}\footnote{This work was carried out during the last two authors' visit to the University of Trento. They would like to thank the Department of Mathematics for hospitality and GNSAGA (INdAM) for financial support.}}\\
\small{Dipartimento di Matematica, Universit\`a di Salerno}\\
\small{via Giovanni Paolo II, 132 - 84084 - Fisciano (SA), Italy}\\
\small{E-mail: antortora@unisa.it, mtota@unisa.it}}
\begin{document}
\maketitle

\begin{abstract}
We provide two sufficient conditions to guarantee that the round functions of a translation based cipher generate a primitive group. Furthermore, under the same hypotheses, and assuming that a round of the cipher is {\em strongly} proper and consists of $m$-bit S-Boxes, with $m=3,4$ or $5$, we prove that such a group is the alternating group. As an immediate consequence, we deduce that the round functions of some lightweight translation based ciphers, such as the PRESENT cipher, generate the alternating group.
\end{abstract}
\medskip
\small{\textbf{Keywords:}
Lightweight cryptosystems,
Groups generated by round functions,
Primitive groups,
O'Nan-Scott,
Wreath products,
Affine groups}\\
\medskip
\small{\textbf{MSC 2010:} 20B15,
20B35,
94A60
}

\section{Introduction}

Many block ciphers used in real life applications are \emph{translation based ciphers}, which are essentially iterated block ciphers obtained by the composition of several ``round functions'', that is, key-dependent permutations of the message/cipher space. This class of ciphers has been introduced by Caranti, Dalla Volta and Sala, in~\cite{CDVS09}, and contains well-known ciphers like AES~\cite{AES}, SERPENT~\cite{SERPENT} and PRESENT~\cite{PRESENT}, where this latter is one of the most common lightweight ciphers (i.e., ciphers able to run on devices with very low computing power).

Since 1975, when Coppersmith and Grossman~\cite{copp} defined a set of functions which can be adapted for constructing a block cipher, and studied the permutation group generated by them, much attention has been devoted to the group generated by the round functions of a block cipher. In this context, Kaliski, Rivest and Sherman~\cite{KRS88} proved that if such a group is too small, then the cipher is vulnerable to certain cryptanalytic attacks. Later, Paterson~\cite{Pat} showed that the imprimitivity of the group can be used to construct a trapdoor that may be difficult to detect.

For a translation based cipher, in~\cite{CDVS09}, the authors provided two cryptographic conditions on S-Boxes (i.e., the weakly differential uniformity and the strongly anti-invariance) which guarantee the primitivity of the group generated by the round functions of the cipher. Furthermore, in~\cite{ONAN}, using the O'Nan-Scott classification of finite primitive groups, together with another cryptographic assumption, it has been proved that the group in question is the alternating  group. Unfortunately, both these results are not applicable to the PRESENT cipher. Motivated by this, in the present paper we continue the study of the group generated by the round functions of a translation based cipher.

The paper is organised as follows. In Section~\ref{sec:tbciphers}, we recall some definitions and a series of properties and already known results on translation based ciphers. In Section~\ref{sec:primitive}, we deal with primitive groups containing an abelian regular subgroup. In particular, we prove the primitivity of the group $G$ generated by the round functions of a translation based cipher satisfying different cryptographic assumptions with respect to the result given in~\cite{CDVS09}. More precisely, we consider the differential uniformity which allows us to relax the hypothesis on the strongly anti-invariance. Then, in Section~\ref{sec:main}, we provide some additional conditions from which it follows that $G$ is the alternating group. This is, for instance, the case when under the same hypotheses a round of the cipher is {\em strongly} proper and consists of $m$-bit S-Boxes, with $m=3,4$ or $5$. As an immediate consequence, we deduce that the round functions of some lightweight ciphers, such as PRESENT~\cite{PRESENT}, RECTANGLE~\cite{RECTANGLE} and PRINTcipher~\cite{PRINTcipher}, generate the alternating group. Finally, in Section \ref{sec:finrem},
we put in evidence the relationship between the non-linearity of a permutation and the strongly anti-invariance.
\section{Translation based ciphers}
\label{sec:tbciphers}

Let $\mathcal{C}$ be a block cipher acting on a message space $V=(\FF_2)^{d}$, for some $d\geq1$,  and suppose that $V$ coincides with the ciphertext space. Denote by $\mathcal K$ its key space. Then any key $k\in\mathcal K$ induces a permutation $\tau_k$ on $V$ and $\mathcal{C}=\{\tau_k\,|\,k\in\mathcal{K}\}$. We are interested in determining the subgroup $$\Gamma=\Gamma(\mathcal C)=\langle \tau_k\mid k\in\mathcal K\rangle$$
of the symmetric group on $V$ generated by all permutations $\tau_k$.

Unfortunately, the study of $\Gamma$ appears a difficult problem. However, most modern block ciphers are iterated ciphers, i.e., obtained by a composition of several rounds. This allows to investigate an easier permutation group related to $\Gamma$. Assume therefore that $\mathcal C$ is an iterated block cipher. Then each $\tau_k$ is a composition of some permutations of $V$, say $\tau_{k,1},\dots,\tau_{k,l}$. For any round $h$, let
$$\Gamma_h=\Gamma_h(\mathcal C)=\langle\tau_{k,h}\mid k\in\mathcal K\rangle.$$
Thus we can define a new group containing $\Gamma$, that is
$$\Gamma_{\infty}=\Gamma_{\infty}(\mathcal C)=\langle\Gamma_h\mid h\in\{1,\dots,l\}\rangle,$$
which is also known as the {\it group generated by the round functions}.

To better understand the structure of $\Gamma_{\infty}$, we refer to translation based ciphers \cite{CDVS09,ACDVS}. This is a class of iterated block ciphers including some well-known ciphers, as for instance AES \cite{AES} and SERPENT \cite{SERPENT}.

In order to recall the definition of a translation based cipher $\mathcal{C}$ and cite some results on $\Gamma_{\infty}(\mathcal{C})$, we first fix the notation. Let $m,n>1$ and
$$V=V_1\oplus\dots\oplus V_n,$$
where each $V_i$ is isomorphic to $(\FF_2)^m$. As usual, we denote by $\Sym(V)$ and $\Alt(V)$ the symmetric group and the alternating group on $V$, respectively. Given $v\in V$, we write $\sigma_{v} \in \Sym(V)$ for the translation of $V$ mapping $x$ to $x+v$ and denote by $T(V)=\{\sigma_v \mid  v\in  V\}$ the group of all translations of $V$. Clearly $T(V)$ is an elementary abelian regular subgroup of $\Sym(V)$. Also, we denote by $\GL(V)$ the group of linear permutations on $V$ and by $\AGL(V)=\GL(V)\ltimes T(V)$ the group of affine transformations of $V$.

Any $\gamma\in \rm{Sym}(V)$ is called a {\it bricklayer transformation} if, for any $i\in \{1,\ldots,n\}$ and $v=v_1+ \dots + v_n$, with $v_i\in V_i$, there exists a {\it brick} $\gamma_i\in{\rm Sym}(V_i)$ such that $$v\gamma=v_1\gamma_1+\dots+ v_n\gamma_n.$$
In symmetric cryptography, the permutation $\gamma$ is traditionally called parallel S-Box, and each $\gamma_i$ is an $m$-bit S-Box.

A linear map $\lambda:V\rightarrow V$ is a {\it mixing layer} when it is used in composition with a bricklayer transformation. We say that a nontrivial proper subspace $W$ of $V$ is a {\em wall} if it is a sum of some of the subspaces $V_i$. A linear permutation $\lambda\in \GL(V)$ is then a {\it proper mixing layer} if no wall is $\lambda$-invariant. We also say that $\lambda$ is a {\it strongly proper mixing layer} if there are no walls $W$ and $W'$ such that $W\gl=W'$.

\begin{definition}[see \cite{CDVS09}] An iterated block cipher $\mathcal{C}=\{\tau_k \mid k  \in \mathcal{K} \}$ over
  $\FF_2$ is translation based (tb, for short) if the following hold:
       \begin{itemize}
    \item[$(1)$] each $\tau_k$ is the composition of a finite number, say $l$, of round functions $\tau_{k,h}$, with $k\in\mathcal K$ and $h\in\{1,\dots,l\}$, such that each $\tau_{k,h}$ can be written as a composition $\gamma_h\lambda_h\sigma_{\phi(k,h)}$ of three permutations of $V$, where
    \begin{itemize}
    \item[-] $\gamma_h$ is a bricklayer transformation not depending on $k$ and $0\gamma_h=0$,
     \item[-] $\lambda_h$ is a linear permutation not depending on $k$,
    \item[-] $\phi: \mathcal K\times\{1,\dots,l\}\rightarrow V$ is the key scheduling function, so that $\phi(k,h)$ is the $h$-th {\it round key}, given by the master key $k$;
     \end{itemize}

      \item[$(2)$] at least one round is a proper round, that is,
\begin{itemize}
         \item[-] $\lambda_{h}$ is a proper mixing layer for some $h$, and
    \item[-] the map $\phi_{h}: \mathcal K\rightarrow V$ given by $k\rightarrow\phi(k,h)$ is surjective.
     \end{itemize}

    \end{itemize}

\end{definition}

As noted in \cite[Remark 3.3]{CDVS09} the assumption  $0\gamma_h=0$ in (1) is not restrictive. Indeed, we can always include $0\gamma_h$ in the round key addition of the previous round.

In what follows, for a fixed round $h$, we drop the round index $h$ and denote by $\rho\sigma_k$, with $\rho=\gamma\lambda$, the corresponding round function. Furthermore, we refer to a {\em strongly proper round} whenever the mixing layer of a proper round is strongly proper.

\begin{lemma}\label{even}
Let $\mathcal{C}$ be any tb cipher. Then $\Gamma_\infty(\mathcal{C})$ contains only even permutations.
\end{lemma}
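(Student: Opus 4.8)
The plan is to show that each generator $\tau_{k,h}=\gamma_h\lambda_h\sigma_{\phi(k,h)}$ of $\Gamma_\infty(\mathcal C)$ is an even permutation of $V$. Since the even permutations of $V$ form the subgroup $\Alt(V)$ and the sign is multiplicative, it then suffices to verify separately that every translation, every linear permutation, and every bricklayer transformation is even; the product of three even permutations is even, so $\Gamma_\infty(\mathcal C)\le\Alt(V)$, which is exactly the assertion. Throughout I would use that $\dim V=d=mn$ with $m,n>1$, so that $d\ge 4$ and, more importantly, that the various powers of $2$ appearing below are all even.

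For the translations, note that for $v\neq 0$ the map $\sigma_v$ is a fixed-point-free involution on $V$ (because $x+v=x$ forces $v=0$, and $2v=0$ in characteristic $2$), hence a product of $|V|/2=2^{d-1}$ disjoint transpositions. As $d\ge 2$, the number $2^{d-1}$ is even and $\sigma_v$ is even, while $\sigma_0$ is the identity. For the bricklayer transformation $\gamma=\gamma_1\times\cdots\times\gamma_n$ acting coordinatewise on $V=V_1\oplus\cdots\oplus V_n$, I would invoke the standard sign formula for a product action: the sign of $\gamma$ equals $\prod_{i=1}^{n}\mathrm{sgn}(\gamma_i)^{\,e_i}$, where $e_i=\prod_{j\neq i}|V_j|=2^{m(n-1)}$ is the multiplicity with which the action of $\gamma_i$ is repeated in the product. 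Since $m(n-1)\ge 2$, each exponent $e_i$ is even, so every factor equals $1$ and $\gamma$ is even regardless of the individual bricks $\gamma_i$.

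The only genuinely non-elementary point is the linear layer, where I must show that the whole of $\GL(V)$ consists of even permutations. Two routes are available. The direct one is to recall that $\GL(V)=\GL_d(\FF_2)$ is generated by elementary transvections; a transvection fixes the $2^{d-1}$ vectors lying on one coordinate hyperplane and pairs up the remaining $2^{d-1}$ vectors, so it is a product of $2^{d-2}$ transpositions, an even number because $d\ge 3$. Being a product of such transvections, $\lambda$ is even. Alternatively, one observes that over $\FF_2$ one has $\GL_d(\FF_2)=\mathrm{SL}_d(\FF_2)$, which for $d\ge 3$ is a perfect (indeed simple) group and therefore admits no nontrivial homomorphism into the abelian group $\{\pm1\}$; hence the restriction of the sign map is trivial and $\GL(V)\le\Alt(V)$.

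Combining the three facts, each $\tau_{k,h}$ is even, so $\Gamma_\infty(\mathcal C)$ is generated by even permutations and is contained in $\Alt(V)$. I expect the main obstacle to be pinning down the linear case cleanly: the translation and bricklayer parts reduce to transparent cycle counts, whereas the evenness of an arbitrary element of $\GL(V)$ rests on either the transvection cycle computation or the perfectness of $\mathrm{SL}_d(\FF_2)$ for $d\ge 3$, both of which crucially exploit the hypotheses $m,n>1$ that force $d=mn$ to be large enough.
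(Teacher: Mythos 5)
Your proof is correct, and its skeleton is the same as the paper's: factor each round function as $\gamma\lambda\sigma_k$ and check that each of the three factors is even. Your handling of the translations (fixed-point-free involutions, hence $2^{d-1}$ disjoint transpositions) and of the bricklayer transformation (decompose into coordinatewise factors, each repeating the cycle structure of the brick $\gamma_i$ exactly $2^{m(n-1)}$ times) is exactly the computation in the paper's proof. The one point where you genuinely diverge is the linear layer: the paper disposes of it by citing \cite[Lemma 2]{We1}, whereas you prove it from scratch, either by writing $\lambda$ as a product of transvections --- each fixes a hyperplane of $2^{d-1}$ vectors and pairs the remaining $2^{d-1}$ vectors into $2^{d-2}$ transpositions, an even number since $d=mn\geq 4$ --- or by noting that $\GL_d(\FF_2)=\mathrm{SL}_d(\FF_2)$ is perfect for $d\geq 3$, so the sign character restricts trivially to it. Both routes are sound (transvections do generate $\mathrm{SL}_d(\FF_2)$, and perfectness does kill any homomorphism into $\{\pm 1\}$), so your version buys self-containedness at the cost of a little extra length, while the paper's citation buys brevity. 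A negligible quibble: for the bricklayer step you only need $m(n-1)\geq 1$ to make the exponent $2^{m(n-1)}$ even; your stronger bound $m(n-1)\geq 2$ is true under the standing hypotheses $m,n>1$ but is more than required.
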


\begin{proof}
Let $\tau_{k,h}=\gamma\lambda\sigma_k$ be an arbitrary round function. Clearly $\sigma_k$ is an even permutation and, by \cite[Lemma 2]{We1}, $\lambda$ is also even. We show that $\gamma$ is even, from which the claim follows.

For any $i\in\{1,\ldots, n\}$, let $\overline{\gamma}_i$ be the permutation of $V$ given by
$$(x_1,\ldots,x_n)\overline{\gamma}_i=(x_1,\ldots,x_{i-1},x_i\gamma_i,x_{i+1},\ldots,x_n).$$
Notice that $\gamma=\overline{\gamma}_1\overline{\gamma}_2\ldots\overline{\gamma}_n$. Also, if $\gamma_i$ is the product of $t$ transpositions, then  $\overline{\gamma}_i$ is the product of $t\cdot 2^{m(n-1)}= t\cdot  2^{d-m}$ transpositions. Thus each $\overline{\gamma}_i$ is an even permutation and so is $\gamma$, as required.
\end{proof}

Let $G$ be a transitive permutation group on a set $X$. Recall that a partition $\mathcal{B}$ of $X$ is trivial if $\mathcal{B}=\{X\}$ or $\mathcal{B}=\{\{x\}\,|\,x\in X\}$, and it is $G$-invariant if $Yg\in \mathcal{B}$ for any $Y\in \mathcal{B}$ and $g\in G$. The group $G$ is then called {\em primitive} if it has no nontrivial $G$-invariant partition of $X$. On the other hand, if a nontrivial $G$-invariant partition exists, the group is called imprimitive and the partition is a block system of $G$ on $X$.

We now collect together some results which can be found in \cite{ACDVS} (see Lemma 3.4, Lemma 3.5 and Proposition 3.6).

\begin{lemma}\label{lambda inverse}
Let $\mathcal{C}$ be a tb cipher with a proper round $h$. Then
 \begin{itemize}
 \item[(i)] $\Gamma_h(\mathcal{C})=\langle\rho,T(V)\rangle;$
 \item[(ii)] $\Gamma_h(\mathcal{C})$ is imprimitive on $V$ if and only if there exists a nontrivial proper subspace $U$ of $V$ such that
 $(u+v)\gamma+v\gamma\in U\lambda^{-1}$, for all $u\in U$ and $v\in V$. A block system is then of the form $\{U+v\,|\,v\in V\}$.
\end{itemize}
\end{lemma}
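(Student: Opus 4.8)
The plan is to treat the two parts separately, using properness for (i) and the regular action of $T(V)$ for (ii).

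For part (i), I would start from $\Gamma_h=\langle \rho\sigma_{\phi(k,h)}\mid k\in\mathcal K\rangle$ and exploit that the round is proper, so that $\phi_h\colon\mathcal K\to V$ is surjective. Hence $\Gamma_h=\langle\rho\sigma_v\mid v\in V\rangle$. Taking $v=0$ gives $\rho\in\Gamma_h$, and then for every $v\in V$ we recover $\sigma_v=\rho^{-1}(\rho\sigma_v)\in\Gamma_h$, so that $T(V)\le\Gamma_h$ and thus $\langle\rho,T(V)\rangle\le\Gamma_h$. The reverse inclusion is immediate, since each generator $\rho\sigma_v$ already lies in $\langle\rho,T(V)\rangle$; this yields the equality.

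For part (ii), the first observation is that $\Gamma_h=\langle\rho,T(V)\rangle$ is transitive, as it contains the regular group $T(V)$. The key structural step --- and, I expect, the crux of the whole argument --- is to show that every $\Gamma_h$-invariant partition is forced to consist of cosets of an $\FF_2$-subspace. Indeed, such a partition is in particular $T(V)$-invariant; letting $U$ be the block containing $0$, for any $u\in U$ the block $U\sigma_u=U+u$ meets $U$ (both contain $u$), hence equals it, so $U$ is closed under addition and, working over $\FF_2$, is a subspace. Transitivity of $T(V)$ then forces the whole block system to be $\{U+v\mid v\in V\}$, with $U$ nontrivial and proper precisely when the partition is nontrivial. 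This reduces the problem to deciding which subspaces $U$ give rise to a $\Gamma_h$-invariant coset partition.

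Since any coset partition $\{U+v\}$ is automatically $T(V)$-invariant, I would then only need to test invariance under $\rho=\gamma\lambda$. Using that $\lambda$ is linear, $\lambda$ sends cosets of a subspace $X$ to cosets of $X\lambda$, so $\{U+v\}$ is $\rho$-invariant exactly when $(U+v)\gamma$ is a coset of $U\lambda^{-1}$ for every $v$. As $0\in U$, that coset must be $U\lambda^{-1}+v\gamma$, i.e. $(u+v)\gamma+v\gamma\in U\lambda^{-1}$ for all $u\in U$ and $v\in V$, which is the stated condition. The only subtlety is that the condition is phrased as a containment whereas invariance appears to require equality; this gap closes because $u\mapsto(u+v)\gamma+v\gamma$ is injective (as $\gamma$ is a bijection) and $|U\lambda^{-1}|=|U|$, so a containment between two finite sets of equal size is an equality. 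Running this equivalence in both directions gives (ii), and the block system has the displayed form $\{U+v\mid v\in V\}$ with $U$ the subspace produced above.
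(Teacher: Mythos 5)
Your proof is correct, and it follows essentially the same route as the results the paper is quoting here from \cite{ACDVS} (Lemmas 3.4, 3.5 and Proposition 3.6): part (i) from surjectivity of the key schedule at the proper round, then the observation that any $T(V)$-invariant block system must consist of cosets of a subspace, and finally the reduction of $\rho$-invariance to the derivative condition $(u+v)\gamma+v\gamma\in U\lambda^{-1}$, with the containment-versus-equality gap closed by the counting argument. Nothing is missing; in particular you correctly handle the point that preservation of a finite partition by the generators $\rho$ and $T(V)$ suffices for preservation by the whole group $\Gamma_h(\mathcal{C})=\langle\rho,T(V)\rangle$.
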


Let $\delta$ and $m$ be positive integers, and let $f:(\FF_2)^m\rightarrow(\FF_2)^m$ be a vectorial Boolean function. Denote by $\hat{f}_u(x)=f(x+u)+f(x)$ the derivative of $f$ in the direction of $u\in(\FF_2)^m$. Recall that $f$ is {\em differentially $\delta$-uniform} (or $\delta$-uniform, for short) if
$$|\{x\in (\FF_2)^m: \hat{f}_u(x)=v\}|\leq\delta,$$
for all $u,v\in (\FF_2)^m$, with $u\neq 0$. By \cite[Fact 3]{CDVS09}, it follows that
$$|{\rm Im}(\hat{f}_u)|\geq\frac{2^{m}}{\delta}.$$

Following \cite{CDVS09}, we say that $f$ is {\it weakly $\delta$-uniform} if
$$|{\rm Im}(\hat{f}_u)|>\frac{2^{m-1}}{\delta},$$
for all $u \in(\mathbb F_2)^m\backslash\{0\}$. Of course, every $\delta$-uniform function is weakly $\delta$-uniform.

Given $1\leq r<m$, we also say that $f$ is {\it $r$-anti-invariant} if $f(0)=0$ and, for any subspace $U$ of $(\mathbb F_2)^m$ such that $f(U)=U$, either $\dim(U)<m-r$ or $U =(\mathbb F_2)^m$. Furthermore, $f$ is said to be {\it strongly $r$-anti-invariant} if, for any two subspaces $U$ and $W$ of $(\mathbb F_2)^m$ such that $f(U)=W$, then either $\dim(U)=\dim(W)<m-r$ or $U =W=(\mathbb F_2)^m$.

The next result is Theorem 4.4 of  \cite{CDVS09}.

\begin{theorem}\label{Th 4.4}
Let $\mathcal{C}$ be a tb cipher with a proper round $h$. Suppose that, for some $1\leq r<m$, each brick of $\gamma$ is
\begin{itemize}
  \item[$(i)$] weakly $2^r$-uniform, and
  \item[$(ii)$] strongly $r$-anti-invariant.
\end{itemize}
Then $\Gamma_h(\mathcal{C})$ is primitive, and hence so is $\Gamma_{\infty}(\mathcal{C})$.
\end{theorem}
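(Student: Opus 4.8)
The plan is to argue by contradiction using the imprimitivity criterion of Lemma~\ref{lambda inverse}(ii). Suppose $\Gamma_h(\mathcal{C})$ is imprimitive. Then there is a nontrivial proper subspace $U$ of $V$ with $(u+v)\gamma+v\gamma\in W$ for all $u\in U$ and $v\in V$, where I set $W:=U\lambda^{-1}$; note $\dim W=\dim U$ since $\lambda^{-1}\in\GL(V)$. The first observation I would record is that this condition says exactly that $\gamma$ carries cosets of $U$ into cosets of $W$: if $x\equiv x'\pmod U$, then writing $u=x+x'$ and $v=x'$ gives $x\gamma+x'\gamma\in W$. Since $\gamma$ is a bijection and $|U|=|W|$, the induced map $V/U\to V/W$ is a bijection of coset sets, so in fact $x\equiv x'\pmod U \iff x\gamma\equiv x'\gamma\pmod W$ (in particular $U\gamma=W$, using $0\gamma=0$).

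Next I would localise this to a single brick. Writing $\pi_i$ for the projection $V\to V_i$, set $S=\{i:U\pi_i\neq0\}$. Fixing a block $i$ and taking $x,x'$ that differ only in their $i$-th component $a,a'\in V_i$, the bricklayer structure makes everything outside block $i$ cancel, so the equivalence above collapses to the block-wise statement
\[
a+a'\in U\cap V_i \iff a\gamma_i+a'\gamma_i\in W\cap V_i,\qquad a,a'\in V_i .
\]
Putting $a'=0$ yields $\gamma_i(U\cap V_i)=W\cap V_i$, an honest equality of subspaces of $V_i$. Independently, I would use weak $2^r$-uniformity to bound $\dim(W\cap V_i)$ from below on $S$: for $u\in U$ with $u_i\neq0$, because $\gamma$ is a bricklayer the image of its derivative in direction $u$ is the direct sum of the brick images ${\rm Im}(\widehat{\gamma_j}_{u_j})$, and projecting a difference of two of its elements onto block $i$ shows that ${\rm Im}(\widehat{\gamma_i}_{u_i})$ lies in a single coset of $W\cap V_i$. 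Since weak $2^r$-uniformity gives $|{\rm Im}(\widehat{\gamma_i}_{u_i})|>2^{m-1-r}$, we get $|W\cap V_i|>2^{m-1-r}$, i.e.\ $\dim(W\cap V_i)\geq m-r$ for every $i\in S$. Now strong $r$-anti-invariance applied to $\gamma_i(U\cap V_i)=W\cap V_i$ leaves only the alternatives $\dim(U\cap V_i)=\dim(W\cap V_i)<m-r$ or $U\cap V_i=W\cap V_i=V_i$; the former is excluded for $i\in S$ by the dimension bound, so $U\cap V_i=W\cap V_i=V_i$ there.

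Finally I would assemble these full blocks. Since $U\subseteq\bigoplus_{i\in S}V_i$ by definition of $S$, while $V_i\subseteq U$ for each $i\in S$, we get $U=\bigoplus_{i\in S}V_i$, a nontrivial proper wall; comparing dimensions, $W=\bigoplus_{i\in S}V_i=U$ as well. Hence $U\lambda^{-1}=U$, i.e.\ the wall $U$ is $\lambda$-invariant, contradicting that $\lambda$ is a proper mixing layer. This proves $\Gamma_h(\mathcal{C})$ is primitive, and since $\Gamma_h(\mathcal{C})\leq\Gamma_\infty(\mathcal{C})$ and any block system for the larger group is also one for the smaller, $\Gamma_\infty(\mathcal{C})$ is primitive too. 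The step I expect to be most delicate is the passage from the global coset condition to the two per-brick facts that are used together — the exact subspace equality $\gamma_i(U\cap V_i)=W\cap V_i$ on the one hand, and the weak-uniformity dimension bound $\dim(W\cap V_i)\geq m-r$ on the other — since it is precisely their interaction through strong $r$-anti-invariance that forces the blocks to be full; keeping careful track of which blocks lie in the support $S$ throughout is where the bookkeeping must be done correctly.
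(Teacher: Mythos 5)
Your proof is correct. Note first that the paper itself contains no proof of this statement: Theorem~\ref{Th 4.4} is quoted from \cite{CDVS09}, so the natural in-paper comparison is with the proof of the analogous Theorem~\ref{primitivity}, whose hypotheses trade weak $2^r$-uniformity and strong $r$-anti-invariance for $2^r$-uniformity and strong $(r-1)$-anti-invariance. Your argument has the same skeleton as that proof: invoke Lemma~\ref{lambda inverse}(ii), identify $W=U\lambda^{-1}$ with $U\gamma$, run a per-brick analysis combining the uniformity bound with strong anti-invariance, and close with the wall contradiction against properness of $\lambda$. Where you genuinely diverge is in the treatment of the derivative image. The paper's proof of Theorem~\ref{primitivity} proceeds by a case split (either $U\cap V_i=V_i$ for every supported block, giving a wall, or $U\cap V_j\subset V_j$ for some $j$), must first prove that $U\cap V_j$ is nontrivial, and then chooses the derivative direction \emph{inside} $U\cap V_j$, so that ${\rm Im}(\hat{\gamma_j}_{u})$ lies in the subspace $W\cap V_j$ itself. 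You instead take any $u\in U$ with $u_i\neq 0$ and exploit the product structure of ${\rm Im}(\hat{\gamma}_u)$, together with the fact that $W$ is a subspace, to place ${\rm Im}(\hat{\gamma_i}_{u_i})$ in a single coset of $W\cap V_i$; a coset is just as good for the cardinality bound, so you get $\dim(W\cap V_i)\geq m-r$ for every supported block at once, and with the per-brick equality $(U\cap V_i)\gamma_i=W\cap V_i$ the strong $r$-anti-invariance forces $U\cap V_i=W\cap V_i=V_i$ uniformly, with no case split and no nontriviality lemma. The coset formulation is exactly the right device for the weak hypothesis, under which the direction cannot be assumed to lie in $U\cap V_i$ (that intersection could a priori be trivial); your preliminary observation that imprimitivity makes $\gamma$ induce a bijection $V/U\to V/W$, and its localization to the bricks, are both verified correctly and are what make this streamlined assembly work.
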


\subsection{Some applications to real-life Cryptography}

${\rm AES}$ and ${\rm SERPENT}$ are two translation based ciphers used in real-life applications and their bricks satisfy the hypotheses of Theorem \ref{Th 4.4}; as a consequence, $\Gamma_\infty({\rm AES})$ and $\Gamma_\infty({\rm SERPENT})$ are primitive groups \cite{CDVS09}. Actually, in \cite{We1} and \cite{We2}, with an ad hoc proof, it has been proved respectively that $\Gamma_\infty({\rm AES})=\Alt(V)$ and $\Gamma_\infty({\rm SERPENT})=\Alt(V)$. See also \cite{ONAN, SW} for an AES-like cipher, \cite{GOST} for a GOST-like cipher, and \cite{werdes,kasumi} for ${\rm DES}$ and ${\rm KASUMI}$ respectively.

Other  interesting translation based ciphers are those of type \emph{lightweight}, i.e., ciphers designed to run on devices with very low computing power. The most used in real life applications is PRESENT~\cite{PRESENT} and we would like to apply similar techniques, in order to investigate $\Gamma_\infty({\rm PRESENT})$. In the case of PRESENT, we have $V=(\mathbb F_2)^{64}$. The S-Box used in PRESENT is always the same. It is a 4-bit S-Box $\gamma:(\FF_2)^4 \to (\FF_2)^4$ and its action in hexadecimal notation is given in Table \ref{tab:gamma}. The mixing layer of PRESENT is given in Table \ref{tab:gl} and it is  proper.
\begin{table}[h]
\centering
    \begin{tabular}{|c|c|c|c|c|c|c|c|c|c|c|c|c|c|c|c|c|}
\hline
$x$&0&1&2&3&4&5&6&7&8&9&A&B&C&D&E&F\\
\hline
$x\gamma$&C&5&6&B&9&0&A&D&3&E&F&8&4&7&1&2\\
\hline
\end{tabular}\caption{PRESENT S-Box}\label{tab:gamma}
\end{table}

\begin{table}[h]
\centering
\begin{tabular}{|c|c|c|c|c|c|c|c|c|c|c|c|c|c|c|c|c|}
\hline
$i$&0&1&2&3&4&5&6&7&8&9&10&11&12&13&14&15\\

$i\lambda$&0&16&32&48&1&17&33&49&2&18&34&50&3&19&35&51\\
\hline
\hline
$i$&16&17&18&19&20&21&22&23&24&25&26&27&28&29&30&31\\

$i\lambda$&4&20&36&52&5&21&37&53&6&22&38&54&7&23&39&55\\
\hline
\hline
$i$&32&33&34&35&36&37&38&39&40&41&42&43&44&45&46&47\\

$i\lambda$&8&24&40&56&9&25&41&57&10&26&42&58&11&27&43&59\\
\hline
\hline
$i$&48&49&50&51&52&53&54&55&56&57&58&59&60&61&62&63\\

$i\lambda$&12&28&44&60&13&29&45&61&14&30&46&62&15&31&47&63\\
\hline
\end{tabular}\caption{PRESENT mixing layer}\label{tab:gl}
\end{table}

\begin{remark}\label{present}
{\rm By a computer check (using, for instance, MAGMA \cite{MAGMA}) it is possible to see that the PRESENT mixing layer  is strongly proper and its S-Box is weakly $4$-uniform and strongly $1$-anti-invariant (the strongly anti-invariance is computed with an equivalent permutation that sends $0$ to $0$).}
\end{remark}

By the previous remark, the S-Box of PRESENT does not satisfy the hypotheses of Theorem \ref{Th 4.4}. Nevertheless, in the next section, we will see that $\Gamma_\infty({\rm PRESENT})$ is primitive, as well.

\section{Primitive groups with an abelian regular subgroup}
\label{sec:primitive}

Our first result is similar to Theorem \ref{Th 4.4}. We now consider the differential uniformity instead of the weakly differential uniformity and this leads to relax the assumption on the strongly anti-invariance.

\begin{theorem}
\label{primitivity}
Let $\mathcal{C}$ be a tb cipher with a proper round $h$. Suppose that, for some $1<r<m$, each brick of $\gamma$ is
\begin{itemize}
  \item[$(i)$] $2^r$-uniform, and
  \item[$(ii)$] strongly $(r-1)$-anti-invariant.
\end{itemize}
Then $\Gamma_h(\mathcal{C})$ is primitive, and hence so is $\Gamma_{\infty}(\mathcal{C})$.
\end{theorem}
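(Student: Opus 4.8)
The plan is to prove primitivity by contradiction using the imprimitivity criterion from Lemma~\ref{lambda inverse}(ii). Suppose $\Gamma_h(\mathcal{C})$ is imprimitive. Then there is a nontrivial proper subspace $U$ of $V$ such that
$$
(u+v)\gamma + v\gamma \in U\lambda^{-1}, \qquad \text{for all } u\in U,\ v\in V.
$$
Setting $W = U\lambda^{-1}$, this says that every derivative $\widehat{\gamma}_u$ of the parallel S-box (in a direction $u\in U$) takes values inside the fixed subspace $W$. The goal is to show that this forces $U$ to be trivial, contradicting the assumption. My strategy is to first exploit the differential uniformity hypothesis to get a \emph{lower} bound on how large the image of each derivative must be, and then combine this with the strong anti-invariance to trap the relevant subspaces between two dimensions, squeezing out a contradiction.

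First I would pass from the global statement on $V$ to a per-brick statement. Because $\gamma$ is a bricklayer transformation and $U$ is a subspace, I would decompose $U$ according to its projections onto the blocks $V_i$, and likewise analyse $W=U\lambda^{-1}$. Choosing $u\in U$ supported in a single (or a controlled set of) coordinate(s), the derivative condition descends to a condition on the bricks $\gamma_i$: the image of $\widehat{(\gamma_i)}_{u_i}$ must lie in a corresponding subspace. Here I would use the $2^r$-uniformity of each brick together with the cited inequality $|\mathrm{Im}(\widehat{f}_u)|\geq 2^m/\delta$: with $\delta = 2^r$ this gives $|\mathrm{Im}(\widehat{(\gamma_i)}_{u_i})|\geq 2^{m-r}$. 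So the subspace containing all these images must have dimension at least $m-r$, i.e.\ codimension at most $r$.

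Next I would bring in the strong $(r-1)$-anti-invariance. The subspace generated by the images of the derivatives is linked, via the relation $\gamma(U\text{-part}) = W\text{-part}$, to a subspace mapped by a brick onto another subspace. Strong $(r-1)$-anti-invariance says that whenever $\gamma_i(U')=W'$ with $U',W'$ proper subspaces, we must have $\dim U' = \dim W' < m-(r-1) = m-r+1$, hence $\dim U' \leq m-r$. Combining this upper bound with the lower bound $\dim \geq m-r$ from the uniformity step pins the dimension down to exactly $m-r$. The delicate point is to set up the image subspace and the invariance relation so that these two opposite bounds genuinely refer to the same subspace; once they do, the equality case must be examined and shown to be impossible (for instance because it would force $\gamma_i$ to collapse the derivative images into an affine subspace incompatible with $\widehat{(\gamma_i)}_{u_i}$ ranging over a set of the required size), thereby yielding the contradiction and proving that no such $U$ exists.

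The main obstacle, I expect, is precisely the bookkeeping that converts the single global subspace $U$ and its image $W=U\lambda^{-1}$ into the matching pair of per-brick subspaces on which the anti-invariance hypothesis can be invoked, and then reconciling the resulting dimension equality $\dim = m-r$ with the strict inequality $\dim < m-r$ demanded by strong $(r-1)$-anti-invariance. This is the step where the weakening from ``weakly $2^r$-uniform, strongly $r$-anti-invariant'' (Theorem~\ref{Th 4.4}) to ``$2^r$-uniform, strongly $(r-1)$-anti-invariant'' must be paid for: the stronger uniformity (full, not weak) is what upgrades the image-size bound from $> 2^{m-1}/\delta$ to $\geq 2^m/\delta$, buying exactly the one extra dimension needed to offset the relaxed anti-invariance index $r-1$.
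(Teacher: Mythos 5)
Your overall strategy coincides with the paper's: assume imprimitivity, invoke Lemma~\ref{lambda inverse}(ii) to obtain a block subspace $U$ whose derivatives land in $W=U\lambda^{-1}$, descend to the bricks, and squeeze a per-brick subspace between a lower bound from $2^r$-uniformity and an upper bound from strong $(r-1)$-anti-invariance. However, there are two genuine gaps. First, you never use the hypothesis that $\lambda$ is a \emph{proper} mixing layer, and no proof can avoid it: if $\lambda$ were the identity, every wall $U=\oplus_{i\in I}V_i$ would yield a block system, so the statement would be false. Concretely, your per-brick reduction breaks down exactly when $U\cap V_i$ equals $\{0\}$ or $V_i$ for every $i$, i.e.\ when $U$ is a wall: then there is no nontrivial proper per-brick subspace to which uniformity or anti-invariance can be applied. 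The paper handles this case separately: for a wall, $U\gamma=U$, and since $U\gamma=U\lambda^{-1}$ (which itself requires the observation that $0\gamma\lambda=0$ forces $U\gamma\lambda=U$, making $U\gamma$ a subspace), one gets $U\lambda=U$, contradicting properness.

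Second, your squeeze does not close. Your lower bound is $\dim\geq m-r$ (from $|\mathrm{Im}(\hat{\gamma_j}_u)|\geq 2^{m-r}$) and your upper bound is $\dim<m-r+1$, i.e.\ $\dim\leq m-r$; these are compatible, and you acknowledge that the equality case remains. The mechanism you sketch for killing it (the image being forced into an affine subspace ``incompatible'' with its size) is not the right one and proves nothing as stated. The paper's decisive observation is elementary but essential: since $\gamma_j$ is a \emph{permutation} and $u\neq 0$, one has $0\notin\mathrm{Im}(\hat{\gamma_j}_u)$; as the subspace $W\cap V_j$ contains both $\mathrm{Im}(\hat{\gamma_j}_u)$ and $0$, it has at least $2^{m-r}+1$ elements, hence $\dim(W\cap V_j)\geq m-r+1$, contradicting the anti-invariance bound outright---no equality case survives. (This is exactly the ingredient your final paragraph is groping for: full uniformity alone does not buy the extra dimension; the exclusion of $0$ from the derivative image does.) Note also that before any of this you must produce the matching pair of \emph{nontrivial} proper subspaces with $(U\cap V_j)\gamma_j=W\cap V_j$; nontriviality of $U\cap V_j$ is not automatic, and the paper derives it by noting that otherwise a derivative $\hat{\gamma_j}_{u_j}$ would be constant, violating $2^r$-uniformity (this is where $r<m$ is used).
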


\begin{proof}
Suppose that $\Gamma_h(\mathcal C)$ is imprimitive. Then, by $(ii)$ of Lemma \ref{lambda inverse}, there exists a nontrivial proper subspace $U$ of $V$ such that $\{U+v\mid v\in V\}$ is a block system of $\Gamma_h(\mathcal C)$ on $V$. Recall that $\gamma\lambda\in \Gamma_h(\mathcal C)$, by $(i)$ of Lemma \ref{lambda inverse}. Thus $U\gamma\lambda=U+v$, for some $v\in V$. Since $0\gamma\lambda=0$ we deduce that $U+v=U$, and therefore $W=U\gamma=U\lambda^{-1}$ is a subspace of $V$.

Let $\pi_i$ be the projection $V\rightarrow V_i$ and denote by $I$ the set of all $i$ such that $\pi_i(U)\neq \{0\}$. Clearly $I\neq\emptyset$. Then either $U\cap V_i=V_i$ for all $i\in I$, or there exists $j\in I$ such that $U\cap V_j\subset V_j$. In the first case, $U=\oplus_{i\in I} V_i$ is a wall. This gives $U\gamma=U$ and consequently $U\lambda=U$, which is impossible because $\lambda$ is a proper mixing layer.

Assume $U\cap V_j\subset V_j$, for some $j\in I$. We claim that $U\cap V_j$ is nontrivial. Notice that $\pi_j(U)\ne \{0\}$, so we can consider $u\in U$ such that $\pi_j(u)=u_j\neq 0$. By Lemma \ref{lambda inverse} $(ii)$, for any $v_j\in V_j$, we have $(u+v_j)\gamma+v_j\gamma\in W$. Since $W$ is a subspace, it follows that $w=u\gamma+(u+v_j)\gamma+v_j\gamma\in W$. Actually $w=u_j\gamma_j+(u_j+v_j)\gamma_j+v_j\gamma_j\in W\cap V_j=(U\cap V_j)\gamma_j$. If $w=0$ for all $v_j$, then the map $\hat{\gamma_j}_{u_j}$ is constant, against the fact that $\gamma_j$ is $2^r$-uniform and $r<m$. Hence $w\neq 0$ for some $v_j$, and $0\neq w\gamma_j^{-1}\in U\cap V_j$.

Now $U\cap V_j$ and $W\cap V_j$ are nontrivial proper subspaces of $V_j$ such that $(U\cap V_j)\gamma_j=W\cap V_j$. Moreover, $\gamma_j$ is strongly $(r-1)$-anti-invariant. Thus
\begin{equation}\label{dim}
\dim(U\cap V_j)=\dim (W\cap V_j)<m-r+1.
\end{equation}
Let $u\in (U\cap V_j)\backslash\{0\}$. Again by $(ii)$ of Lemma \ref{lambda inverse}, we have $(u+v_j)\gamma_j+v_j\gamma_j=(u+v_j)\gamma+v_j\gamma\in W\cap V_j$ for any $v_j\in V_j$. Then ${\rm Im}(\hat{\gamma_j}_u)\subseteq W\cap V_j$, where $|{\rm Im}(\hat{\gamma_j}_{u})|\geq 2^{m-r}$ since $\gamma_j$ is $2^r$-uniform. However $0\notin{\rm Im}(\hat{\gamma_j}_{u})$: otherwise $(u+v_j)\gamma_j=v_j\gamma_j$ and $u=0$, since $\gamma_j$ is a permutation. It follows that $|W\cap V_j|\ge 2^{m-r}+1$, which implies that $\dim( W\cap V_j)\ge m-r+1$, in contradiction to (\ref{dim}). This proves that $\Gamma_h(\mathcal C)$ is primitive.
\end{proof}

\begin{remark}\label{recprin}
{\rm The previous theorem, together with Remark \ref{present}, allows us to conclude that $\Gamma_{\infty}({\rm PRESENT})$ is primitive. Similarly for the lightweight ciphers RECTANGLE~\cite{RECTANGLE} and PRINTcipher~\cite{PRINTcipher}. Indeed, these are tb ciphers with a strongly proper mixing layer. Furthermore,  the RECTANGLE 4-bit S-Box and the PRINTcipher 3-bit S-Box satisfy the hypothesis of Theorem \ref{primitivity}.}
\end{remark}

Notice that the structure of $\Gamma_h(\mathcal{C})$ is also known. In fact, by $(i)$ of Lemma \ref{lambda inverse}, the group $\Gamma_h(\mathcal{C})$ contains $T(V)$ which is an {\em elementary} abelian regular subgroup. We are thus able to apply the characterization of finite primitive groups with an abelian regular subgroup \cite[Theorem 1.1, see also Lemma 3.6 for more details]{Li}. For reader's convenience, we restate it in the particular case where the degree is $2^d$, for some $d\geq 1$.

\begin{theorem}\label{Li}
Let $G$ be a primitive permutation group of degree $2^d$, with $d\geq1$. Then $G$ contains an abelian regular subgroup $T$ if and only if either
\begin{itemize}
  \item[$(1)$] $G\leq \AGL(d,2)$, or
  \item[$(2)$] $G=(S_1\times\ldots\times S_c).O.P\quad and\quad T=T_1\times\ldots\times T_c,$ where $c\geq 1$ divides $d$, each $T_i<S_i$ with $|T_i|=2^{d/c}$, the $S_i$ are all conjugate, $O\leq \Out(S_1)\times\ldots\times \Out(S_c)$, $P$ permutes transitively the $S_i$, and one of the following holds:
  \begin{itemize}
  \item[$(i)$] $S_i\simeq \PGL(d,q)$ and $T_i$ is a cyclic group of order $(q^d-1)/(q-1)$, for $q$ a prime power, or
  \item[$(ii)$] $S_i\simeq \Alt(2^{d/c})$ or $\Sym(2^{d/c})$ and $T_i$ is an abelian group of order $2^{d/c}$.
  \end{itemize}
\end{itemize}
\end{theorem}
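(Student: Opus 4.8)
The statement is the specialisation to degree $2^d$ of Li's general classification \cite{Li} of finite primitive permutation groups containing an abelian regular subgroup, which is itself an application of the O'Nan--Scott theorem. The plan is therefore not to re-prove that classification, but to start from it and read off which configurations survive once the abelian regular subgroup $T$ is required to have $2$-power order $|T|=2^d$. Li's theorem already separates the affine possibility from a product-type possibility whose building blocks are almost simple groups, and the whole task is to see what each of these reduces to under the constraint $|T|=2^d$.

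First I would dispose of the affine case. In Li's formulation this possibility gives $G\leq\AGL(a,p)$ with $T$ the full translation group of the underlying space, so that $|T|=p^a$. Since $|T|=2^d$ and $p$ is prime, this forces $p=2$ and $a=d$, which is exactly conclusion $(1)$, namely $G\leq\AGL(d,2)$. No further work is needed here.

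Next I would treat the non-affine possibilities. Here Li's theorem produces $G=(S_1\times\dots\times S_c).O.P$ with socle $S_1\times\dots\times S_c$, the $S_i$ pairwise conjugate and permuted transitively by $P$, an outer part $O\leq\Out(S_1)\times\dots\times\Out(S_c)$, and a compatible decomposition $T=T_1\times\dots\times T_c$ in which each $T_i<S_i$ is an abelian regular subgroup. Counting orders gives $|T_i|=2^{d/c}$, whence $c\mid d$. Because the action is of product type, each $S_i$ acts primitively on a set of size $2^{d/c}$, so the decisive filter is simply that the degree of this almost simple component be a power of $2$. Running through Li's list of almost simple primitive groups admitting an abelian regular subgroup, the entries of $2$-power degree are exactly the projective groups acting on a projective space whose point count $(q^{a}-1)/(q-1)$ is a power of $2$, with $T_i$ the corresponding Singer cycle (case $(i)$), and the symmetric and alternating groups in their natural action on $2^{d/c}$ points, with $T_i$ any abelian regular subgroup of that order (case $(ii)$). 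Every remaining entry of Li's almost simple list, in particular the Mathieu and other sporadic examples, has degree not a power of $2$ and is therefore discarded.

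The main obstacle I expect is precisely this last elimination within the almost simple analysis: one must run through the finitely many families and exceptional entries in Li's classification and verify, for each, that requiring the degree (equivalently, the order $2^{d/c}$ of the abelian regular subgroup) to be a power of $2$ leaves only the Singer-cycle projective groups and the natural $\Alt$/$\Sym$ actions. This is bookkeeping against Li's tables together with the elementary number-theoretic remark governing when a Singer order $(q^{a}-1)/(q-1)$ is a power of two, rather than any genuinely new group-theoretic input; everything else in the statement, namely the structure $(S_1\times\dots\times S_c).O.P$, the transitivity of $P$ on the factors, the bound on $O$, and the order relations among the $T_i$, is inherited unchanged from the general theorem.
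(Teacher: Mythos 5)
Your proposal is correct and takes essentially the same route as the paper: the paper gives no independent proof of this statement, presenting it explicitly as a restatement of Li's classification (Theorem 1.1 and Lemma 3.6 of \cite{Li}) in the particular case of degree $2^d$. Your reduction --- forcing $p=2$ and exponent $d$ in the affine case, and discarding the remaining almost simple entries of Li's list (the sporadic ones of degrees $11$, $12$, $23$) because those degrees are not powers of $2$ --- is exactly the specialization the paper implicitly relies on.
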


In part (2) the notation $G=(S_1\times\ldots\times S_c).O.P$ denotes that $N=S_1\times\ldots\times S_c$ is normal in $G$ and $G/N$ is an extension of the group $O$ by the group $P$. Also, according to the O'Nan-Scott classification of finite primitive groups, the group in (1) is of affine type, while the group in (2) is either almost simple, if $c=1$, or a wreath product (in the product action), if $c>1$.

As an immediate consequence of Theorem \ref{Li}, we have the following refinement when $T$ is elementary abelian.

\begin{corollary}\label{Liref}
Let $G$ be a primitive permutation group of degree $2^d$, with $d\geq 1$. Assume that $G$ contains an elementary abelian regular subgroup $T$. Then one of the following holds:
\begin{itemize}
  \item[$(1)$] $G$ is of affine type, that is, $G\leq \AGL(d,2)$;
  \item[$(2)$] $G\simeq \Alt(2^d)$ or $\Sym(2^d)$;
  \item[$(3)$] $G$ is a wreath product, that is,
  $$G=(S_1\times\ldots\times S_c).O.P\quad and\quad T=T_1\times\ldots\times T_c,$$ where $c>1$ divides $d$, each $T_i$ is an abelian subgroup of $S_i$ of order $2^{d/c}$ with $S_i\simeq \Alt(2^{d/c})$ or $\Sym(2^{d/c})$, the $S_i$ are all conjugate, $O\leq \Out(S_1)\times\ldots\times \Out(S_c)$, and $P$ permutes transitively the $S_i$.
\end{itemize}
In particular, if $d\leq 5$, then $G$ cannot be a wreath product.
\end{corollary}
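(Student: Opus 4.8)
The plan is to derive Corollary~\ref{Liref} directly from Theorem~\ref{Li} by specialising to the case where the abelian regular subgroup $T$ is \emph{elementary} abelian, and then to rule out the only remaining obstruction in small degree by a counting argument.

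First I would invoke Theorem~\ref{Li}: since $G$ is primitive of degree $2^d$ and contains the abelian regular subgroup $T$, we are in case $(1)$ or case $(2)$. Case $(1)$ gives directly conclusion $(1)$ of the corollary, so nothing further is needed there. In case $(2)$ we must examine the two subcases $(i)$ and $(ii)$, using crucially the extra hypothesis that $T$ is elementary abelian, i.e.\ every nonidentity element has order $2$. The key observation is that in subcase $(i)$ the factor $T_i$ is cyclic of order $(q^d-1)/(q-1)$ for a prime power $q$; I would show this forces $T$ (which is a direct product of the conjugate $T_i$) to contain an element of order greater than $2$, contradicting that $T$ is elementary abelian, \emph{unless} the cyclic order is $1$ or $2$. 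Checking $(q^{d/c}-1)/(q-1)\le 2$ shows the only possibilities collapse to a trivial or degenerate case that cannot support a primitive group of the required degree, so subcase $(i)$ is eliminated. Hence in case $(2)$ only subcase $(ii)$ survives, where each $S_i\simeq\Alt(2^{d/c})$ or $\Sym(2^{d/c})$ and $T_i$ is abelian of order $2^{d/c}$.

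Next I would split subcase $(ii)$ according to whether $c=1$ or $c>1$, exactly as the remark following Theorem~\ref{Li} indicates. If $c=1$, then $N=S_1\simeq\Alt(2^d)$ or $\Sym(2^d)$ is normal of index dividing $|\Out(S_1)|$; since these outer automorphism groups are trivial (for $2^d\ge 5$, $\Out(\Alt(2^d))$ is trivial except in the degree-$6$ anomaly, which does not arise as a power of two here) and $P$ is forced to be trivial as it permutes a single factor, we get $G\simeq\Alt(2^d)$ or $\Sym(2^d)$, which is conclusion $(2)$. If $c>1$, the structure $G=(S_1\times\ldots\times S_c).O.P$ with the stated constraints is precisely the wreath product described in conclusion $(3)$, so I would simply transcribe the data, noting $c\mid d$ and $c>1$.

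Finally, for the last sentence I would prove that $d\le 5$ forces $c=1$, thereby excluding the wreath-product case $(3)$. Here the arithmetic is the heart of the matter: in case $(3)$ we need $c>1$ with $c\mid d$ and each factor $S_i\simeq\Alt(2^{d/c})$ or $\Sym(2^{d/c})$ containing an abelian subgroup $T_i$ of order $2^{d/c}$ that is \emph{elementary} abelian and regular on $2^{d/c}$ points. For $d/c=1$ the factor $\Alt(2)$ or $\Sym(2)$ is too small to build a primitive product action of the correct degree, and for $d\le 5$ any proper divisor $c>1$ forces $d/c\le 2$, leaving only these degenerate factors; I would check the short list of pairs $(c,d/c)$ with $cd/c=d\le 5$ and $c>1$ and verify each fails. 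The main obstacle I anticipate is not subcase $(ii)$ but rather giving a clean, rigorous elimination of subcase $(i)$: one must argue carefully that the cyclic $T_i$ of order $(q^{d/c}-1)/(q-1)$ can never be elementary abelian of the right size without degenerating, since this is where the hypothesis ``elementary'' does the decisive work in cutting down Theorem~\ref{Li} to the three clean cases.
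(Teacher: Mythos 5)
Your handling of Theorem \ref{Li} itself is sound and coincides with the paper's argument: eliminating subcase $(i)$ because a cyclic $T_i$ inside an elementary abelian $2$-group must have order at most $2$, while $(q^{k}-1)/(q-1)=q^{k-1}+\ldots+q+1=2$ has no solutions; and then splitting subcase $(ii)$ into $c=1$ (conclusion $(2)$) and $c>1$ (conclusion $(3)$). (One small slip: $\Out(\Alt(n))$ is not trivial for $n\geq 5$, $n\neq 6$; it has order $2$. This is harmless, since the corresponding extension of $\Alt(2^d)$ inside $\Sym(2^d)$ is $\Sym(2^d)$, which conclusion $(2)$ allows.)

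The genuine gap is in the final step, which you describe as "arithmetic" and propose to settle by checking that every pair $(c,d/c)$ with $c>1$ and $d\leq 5$ "fails". The case $(c,d/c)=(2,2)$ does \emph{not} fail, and no counting argument will make it fail: take $S_i\simeq\Sym(4)$ and $T_i$ the Klein four-group, which is regular on $4$ points. The group $\Sym(4)\wr\Sym(2)$ in product action is a primitive group of degree $16$ (the base factor $\Sym(4)$ is primitive and non-regular on $4$ points and the top group is transitive), it contains the elementary abelian regular subgroup $T=T_1\times T_2$ of order $16$, and it literally has the shape $(S_1\times S_2).O.P$ of case $(3)$ with $d=4$, $c=2$. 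So the structure in $(3)$, read naively, \emph{does} occur for some $d\leq 5$. What rescues the statement --- and what the paper's proof invokes --- is the socle information from the proof of Lemma 3.6 in \cite{Li}: in case $(2)$ of Theorem \ref{Li} the socle of $G$ equals $\soc(S_1)\times\ldots\times\soc(S_c)$ and is non-abelian, i.e.\ case $(3)$ is an O'Nan--Scott type disjoint from the affine one. For $d\leq 5$ and $c>1$ one has $d/c\leq 2$, so each $S_i$ is $\Alt(2)$, $\Sym(2)$, $\Alt(4)$ or $\Sym(4)$, whose socle is abelian (trivial, of order $2$, or Klein); hence $\soc(G)$ would be abelian, a contradiction. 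In particular $\Sym(4)\wr\Sym(2)$ is no counterexample: its socle is $T$ itself, so it is of affine type and falls under conclusion $(1)$ --- indeed $\Sym(4)=\AGL(2,2)$ acting coordinatewise, together with the (linear) coordinate swap, embeds it in $\AGL(4,2)$. Your proposal never brings in this socle fact, and without it the exclusion of wreath products for $d\leq 5$ cannot be completed; the obstacle you anticipated (subcase $(i)$) is the easy part, while the step you deferred to a routine verification is exactly where the real content of the corollary lies.
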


\begin{proof}
Suppose that $G$ is not affine. Then $G$ is as in part (2) of Theorem \ref{Li}. Let us consider the case when $G$ satisfies $(i)$. Since $T$ is an elementary abelian $2$-group and each $T_i$ is cyclic, it follows that $|T_i|=2$. Hence
$$(q^d-1)/(q-1)=q^{d-1}+q^{d-2}+\ldots+q+1=2,$$
which is impossible. Thus $G$ satisfies $(ii)$ and, of course, we have $G\simeq \Alt(2^d)$ or $\Sym(2^d)$ provided that $c=1$.

Finally, if $d\leq 5$, then the socle $\soc(S_i)$ of each $S_i$ is the Klein group, if $S_i\simeq \Alt(4)$ or $\Sym(4)$, and it is trivial otherwise. On the other hand, by Lemma 3.6 in \cite{Li} (see the proof), the socle of $G$ is a non-abelian group given by $\soc(S_1)\times\ldots\times \soc(S_c)$. Therefore $(3)$ cannot  occur in this case.
\end{proof}

Next we will show that a primitive group on $V=(\mathbb{F}_2)^d$ cannot be of affine type, if $d$ is small and the group is generated by two abelian regular subgroups. First we recall a few preliminary results.

By \cite[Theorem 1]{CDVS06}, for any abelian regular subgroup $T$ of the affine group on $(V,+)$, there is a
structure of an associative, commutative, nilpotent ring $(V,\circ,\cdot)$ on $V$, where
the circle operation is given by
$$x \circ v=x+v+x \cdot v,$$
for all $x,v\in V$. If $T$ is also elementary, then $(V,\circ)$ is a vector space over $\mathbb{F}_2$ such that $T$ is the related translation group. Let $T_+$ and $T_\circ$ be the translation groups with respect to the operations $+$ and $\circ$, respectively, and let us consider the following subspaces
$$U_\circ=\{v\in V\mid x\cdot v=0\text{ for all }x\in V\}$$
and
$$W_\circ=\langle x\cdot v\mid x,v\in V\rangle$$
of $V$. By \cite[Proposition 2.1.6]{Ca15}, if $T_+\neq T_\circ$, then $1\leq \dim(U_\circ)\leq d-2$. Hence, we have $d>2$. Furthermore, slight modifications of Theorem 2.1.18 and Corollary 2.1.29, in \cite{Ca15}, allows us to state that $W_\circ\leq U_\circ$ when $d=3,4$ or $5$; in addition, if $\dim(U_\circ)=d-2$, then $\dim(W_\circ)=1$. It follows that $\dim(W_\circ)=1$ if $d=3$ or $4$, and $\dim(W_\circ)=1$ or $2$ if $d=5$.

\begin{proposition}\label{affine}
Let $d=3,4$ or $5$, and let $G$ be a primitive group on $V=(\mathbb{F}_2)^d$. If $G$ is generated by two elementary abelian regular subgroups, then $G\simeq \Alt(V)$ or $\Sym(V)$.
\end{proposition}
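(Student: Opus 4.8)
The plan is to feed Corollary~\ref{Liref} into the ring machinery recalled just above the statement in order to eliminate the affine possibility. Applying Corollary~\ref{Liref} with the elementary abelian regular subgroup taken to be one of the two given generators, and using that $d\le 5$ forbids the wreath product case, one sees that $G$ is either $\Alt(V)$, $\Sym(V)$, or of affine type. Thus the entire task reduces to excluding the affine case, which I would do by contradiction: assume $G\leq\AGL(V,+)$ and produce a proper nontrivial invariant subspace for the point stabiliser.

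In the affine case the socle $N$ of $G$ is an elementary abelian regular normal subgroup of order $2^d$, which I identify with the translation group $T_+$ of $(V,+)$; then $T_+\trianglelefteq G$ and $G=T_+\rtimes G_0$, where $G_0=G\cap\GL(V)$ is the stabiliser of $0$. Since $T_+\leq G$, every block of a block system through $0$ is a $G_0$-invariant subspace, so primitivity of $G$ forces $G_0$ to act irreducibly on $V$; hence it will suffice to contradict this irreducibility. Let $T_1,T_2$ be the two elementary abelian regular subgroups generating $G$. Each $T_j$ is an abelian regular subgroup of $\AGL(V,+)$, so by \cite[Theorem~1]{CDVS06} it determines a commutative nilpotent ring operation with $x\circ_j v=x+v+x\cdot_j v$, and $T_j$ is the associated $\circ_j$-translation group. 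Writing $L^{(j)}_w\colon x\mapsto x+x\cdot_j w$ for the linear part of the $\circ_j$-translation by $w$, a direct computation gives $L^{(j)}_w\in G_0$; moreover, since the natural projection $\pi\colon\AGL(V,+)\to\GL(V)$ has kernel $T_+$ and $G=T_+\rtimes G_0$, one gets $G_0=\pi(G)=\langle L^{(j)}_w\mid j=1,2,\ w\in V\rangle$.

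The crucial step is to exhibit the invariant subspace, for which I would take $W=W_{\circ_1}+W_{\circ_2}$, where $W_{\circ_j}=\langle x\cdot_j v\mid x,v\in V\rangle$ as in the paragraph preceding the statement. The key point is that for any $y\in W_{\circ_j}$ and any $w$ one has $L^{(i)}_w(y)=y+y\cdot_i w\in W_{\circ_j}+W_{\circ_i}\subseteq W$, because every product $y\cdot_i w$ lies in $W_{\circ_i}$; thus $W$ is invariant under all generators $L^{(i)}_w$ of $G_0$, hence $G_0$-invariant, even though neither summand need be invariant under the other ring's linear maps. This subspace is nonzero: if $W=0$ then both rings are trivial, so $T_1=T_2=T_+$ and $G=T_+$, which is imprimitive for $d\ge 2$, a contradiction. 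Finally $W$ is proper: by the dimension bounds recalled before the statement, $\dim W_{\circ_j}\le 1$ when $d=3,4$ and $\dim W_{\circ_j}\le 2$ when $d=5$, so $\dim W\le 2<d$ for $d=3,4$ and $\dim W\le 4<5$ for $d=5$. A proper nontrivial $G_0$-invariant subspace contradicts the irreducibility of $G_0$, so $G$ cannot be affine, and therefore $G\simeq\Alt(V)$ or $\Sym(V)$.

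I expect the main obstacle to be the two-ring invariance computation together with the verification that $G_0$ is generated exactly by the maps $L^{(j)}_w$; once $G_0=\langle L^{(j)}_w\rangle$ is in hand, the invariance of $W_{\circ_1}+W_{\circ_2}$ follows from the fact that the products $y\cdot_i w$ always return to $W_{\circ_i}$, and the properness is then a routine application of the bounds on $\dim W_\circ$ quoted from \cite{Ca15}. (If one of the two generators happens to already be the translation group $T_+$, the argument collapses to the single-ring computation of the preamble, with $W=W_{\circ}$.)
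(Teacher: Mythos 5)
Your proposal is correct, and its mathematical core is the same as the paper's: the subspace $W_{\circ_1}+W_{\circ_2}$, the dimension bounds from \cite{Ca15} (which make it proper for $d=3,4,5$), and the computation that the linear part $x\mapsto x+x\cdot_j w$ of a $\circ_j$-translation maps this subspace into itself. Where you differ is in how the contradiction with primitivity is extracted, and this is the one place where your write-up takes on extra debt. The paper never claims $T_+\leq G$: it observes that every element of $G=\langle T_\circ,T_{\sqr}\rangle$ is an affine map whose linear part preserves $W_\circ+W_{\sqr}$, so the cosets $\{(W_\circ+W_{\sqr})+v\mid v\in V\}$ are directly a $G$-invariant partition; nothing beyond $G\leq\AGL(V,+)$ is used. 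You instead route through $\soc(G)=T_+$, $G=T_+\rtimes G_0$ and irreducibility of $G_0$. Read literally, Corollary \ref{Liref}(1) only gives $G\leq\AGL(d,2)$, and a primitive subgroup of $\AGL(d,2)$ need not contain $T_+$: already for $d=3$, $\AGL(3,2)$ contains a $2$-transitive complement to $T_+$ isomorphic to $\GL(3,2)$ (the exceptional degree-$8$ action of $\mathrm{PSL}(2,7)$), which meets $T_+$ trivially. That example does not satisfy the hypotheses of the proposition (its Sylow $2$-subgroup is dihedral), so your claim can be salvaged, but to do so you must invoke the O'Nan--Scott meaning of ``affine type'' (elementary abelian regular socle), which is what Li's theorem actually asserts, and then relabel the linear structure of $V$ so that this socle becomes $T_+$ --- harmless, since hypotheses and conclusion are invariant under relabelling, but then the rings $\cdot_1,\cdot_2$ must be formed with respect to the new sum, and this should be said explicitly. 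Note that the detour is avoidable inside your own argument: you already show that $\pi(G)=\langle L^{(j)}_w\rangle$ preserves $W$, and since every element of $G$ is affine with linear part in $\pi(G)$, the partition $\{W+v\mid v\in V\}$ is $G$-invariant with no appeal to a normal $T_+$; this is exactly the paper's shortcut. On the other hand, your treatment of the degenerate case is tidier than the paper's: where the paper handles $T_\circ=T_+$ by a separate argument extracting blocks of size $2$ from $U_{\sqr}$, in your version $W=W_{\circ_2}$ covers that case uniformly.
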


\begin{proof}
Suppose that $G$ is neither isomorphic to $\Sym(V)$ nor to $\Alt(V)$. Then, by Corollary \ref{Liref}, we may assume $G\leq \AGL(V,+)$. Since $G$ is generated by two elementary abelian regular subgroups, by \cite[Theorem 1]{CDVS06}, we may also assume $G=\langle T_\circ,T_{\sqr}\rangle$, where $T_\circ$ and $T_{\sqr}$ are the translations groups with respect to the operations $\circ$ and \scalebox{0.6}{$\square$}. Suppose  $T_\circ=T_+$. Since $\dim(U_{\sqr})\geq 1$, there exists $x\in V\backslash\{0\}$ such that $x$ \scalebox{0.6}{$\square$} $v=x+v$, for all $v\in V$. Thus $\{\{0,x\}+v\mid v\in V\}$ is a block system for $G$ and $G$ is imprimitive, a contradiction. Similarly, if $T_{\sqr}=T_+$. Hence $T_\circ$ and $T_{\sqr}$ are both different from $T_+$, and therefore we can apply the results mentioned above. Keeping the same notation, we have $\dim(W_\circ +W_{\sqr})<d$, so that $W_\circ +W_{\sqr}$ is a proper subspace of $V$.

For any $u\in V$, let $\gt_u^\circ$ and $\gt_u^{\sqr}$ be the translations by $u$ with respect to $\circ$ and \scalebox{0.6}{$\square$}, respectively. Then $\gt_u^\circ=\kappa_u^\circ\sigma_u$, for some $\kappa_u^\circ\in \GL(V,+)$ and $\sigma_u\in T_+$. Take any $x\in W_\circ +W_{\sqr}$. Recall that $x\circ u=x+u+x\cdot u$. Thus
$$x\cdot u=x\gt_u^\circ+x+u=x\kappa_u^\circ+x$$
and, since $x\cdot u\in W_{\circ}$, we have $x\kappa_u^\circ=x\cdot u+ x\in W_\circ +W_{\sqr}$. It follows that
$$((W_\circ +W_{\sqr})+v)\gt^\circ_u=(W_\circ +W_{\sqr})\gk_u^\circ+v\gk_u^\circ+u=(W_\circ +W_{\sqr})+v\gk_u^\circ+u,$$
for all $v\in V$. Clearly, the same holds for $\gt^{\sqr}_u$. This proves that $\{(W_\circ +W_{\sqr})+v\,|\,v\in V\}$ is a block system for $G$, our final contradiction.
\end{proof}

We point out that the previous result cannot be extended to $d=6$, indeed a counterexample can be constructed with the aid of MAGMA \cite{MAGMA}.

\section{The main results}
\label{sec:main}

In this section, for a tb cipher $\cC$ over $(\mathbb{F}_2)^{mn}$ with a strongly proper round $h$, we wonder when $\Gamma_{\infty}(\cC)$ is the alternating group. Of course, it is enough to consider $\Gamma_h(\cC)$. Thus, assuming that $\Gamma_h(\cC)$ is primitive, by Corollary \ref{Liref} and Lemma \ref{even} we can restrict our attention to the cases where $\Gamma_h(\cC)$ is of affine type or a wreath product.

Following \cite{Ca15}, we say that a vectorial Boolean function $f$\,:\,$V\rightarrow V$ is {\em anti-crooked} (AC, for short) if, for any $a\in V\backslash\{0\}$, the set $$\mathrm{Im}(\hat{f}_a) = \{f(x+a)+ f(x)\mid x\in V \}$$ is not an affine subspace of $V$.

In \cite{ACDVS}, part (2) of Theorem 4.5, the AC condition has been used to avoid that $\Gamma_{\infty}(\mathcal{C})$ is of affine type. This result remains valid for $\Gamma_{h}(\mathcal{C})$.

\begin{proposition}\label{Th 4.5}
Let $\mathcal{C}$ be a tb cipher with a proper round $h$ where any brick is AC. If $\Gamma_{h}(\mathcal{C})$ is primitive, then it is not of affine type.
\end{proposition}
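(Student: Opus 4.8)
The plan is to argue by contradiction: assuming $\Gamma_h(\mathcal{C})$ is primitive but of affine type, I will force the bricklayer transformation $\gamma$ to be linear, and hence every brick $\gamma_i$ to be linear, which is flatly incompatible with the AC hypothesis.

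First I would make the standard reduction of the affine case. If $\Gamma_h(\mathcal{C})$ is of affine type, then by Corollary \ref{Liref}(1) we have $\Gamma_h(\mathcal{C})\leq \AGL(V,+)$, so that $T(V)=T_+$ is the translation subgroup of the ambient affine group (exactly the convention used in the affine case of the proof of Proposition \ref{affine}). By Lemma \ref{lambda inverse}(i), $\rho=\gamma\lambda$ belongs to $\Gamma_h(\mathcal{C})$, hence to $\AGL(V,+)$. Since $\lambda$ is a linear permutation, $\lambda\in \GL(V,+)\leq \AGL(V,+)$, and therefore $\gamma=\rho\lambda^{-1}\in \AGL(V,+)$. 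The normalization $0\gamma=0$ from the definition of a tb cipher then upgrades this to $\gamma\in \GL(V,+)$; that is, $\gamma$ is linear.

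Next I would transfer linearity from $\gamma$ to its bricks and read off the contradiction. Because $\gamma$ is a bricklayer transformation with $v\gamma=v_1\gamma_1+\dots+v_n\gamma_n$ and $0\gamma_i=0$, restricting the linear map $\gamma$ to each summand $V_i$ shows that every brick $\gamma_i$ is itself linear. For a linear $\gamma_i$ and any $a\in V_i\backslash\{0\}$ the derivative is constant, $\hat{\gamma_i}_a(x)=\gamma_i(x+a)+\gamma_i(x)=\gamma_i(a)$, so $\mathrm{Im}(\hat{\gamma_i}_a)=\{\gamma_i(a)\}$ is a single point. A single point is (trivially) an affine subspace of $V_i$, contradicting the assumption that $\gamma_i$ is AC. This proves that $\Gamma_h(\mathcal{C})$ cannot be of affine type.

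The argument is short, and the only genuinely delicate point is the very first reduction: one must be certain that ``of affine type'' delivers the containment $\Gamma_h(\mathcal{C})\leq \AGL(V,+)$ \emph{with respect to the standard operation} $+$ (equivalently, that the regular subgroup $T(V)=T_+$ plays the role of the translation group), rather than a containment in $\AGL(V,+')$ for some other elementary abelian regular group serving as translations. This is precisely the content and the convention of Corollary \ref{Liref}(1), used in the same form in the proof of Proposition \ref{affine}, so it may be invoked directly. Once this is granted, every remaining step --- extracting $\gamma$ from $\rho$, descending linearity to the bricks, and observing that the AC condition is exactly what forbids an affine (in particular linear) brick --- is routine. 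I note that primitivity enters only to make the O'Nan--Scott dichotomy of Corollary \ref{Liref} available; the substance of the proof is the incompatibility of linearity with the AC property.
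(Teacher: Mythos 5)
Your argument breaks down at exactly the step you flag as delicate, and the appeal to Corollary \ref{Liref}(1) does not repair it. In Corollary \ref{Liref} (as in Li's Theorem \ref{Li}), ``$G$ is of affine type'' means that the socle of $G$ is an elementary abelian regular \emph{normal} subgroup, so that $G\leq \AGL(V,\circ)$ for the vector space structure $\circ$ determined by that socle; it does \emph{not} say that $G\leq \AGL(V,+)$ for the given operation $+$, nor that the given regular subgroup $T(V)=T_+$ plays the role of the translation group. Since $T_+$ is in general not normal in $\Gamma_h(\mathcal{C})$, there is no reason for it to coincide with the socle. Concretely, there exist vector space structures $\circ\neq+$ on $V$ (``hidden sums'', the subject of \cite{Ca15}) with $T_+\leq\AGL(V,\circ)$; then $\AGL(V,\circ)$ itself is a primitive group of affine type containing $T_+$ that is \emph{not} contained in $\AGL(V,+)$. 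The paper's own proof of Proposition \ref{affine} reflects the correct convention: there, after arranging that the socle gives the standard $+$, the two given regular subgroups become $T_\circ$ and $T_{\sqr}$, generally \emph{different} from $T_+$, and the radical-ring machinery of \cite{CDVS06} is needed precisely to handle this mismatch --- if your reduction were legitimate, that proof would be a one-liner. Note also that you cannot simply relabel the points of $V$ to force $\soc(G)=T_+$: the AC hypothesis is stated with respect to the given $+$ (and the given bricks), and it is not preserved by such a relabelling.

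What your argument actually proves is the special case $\soc(\Gamma_h(\mathcal{C}))=T(V)$, i.e.\ $\Gamma_h(\mathcal{C})\leq\AGL(V,+)$: then indeed $\gamma=\rho\lambda^{-1}$ is $+$-linear, hence every brick is linear, and every derivative image is a single point, contradicting AC. That is the easy half. The substance of the statement --- and of the proof of part (2) of Theorem 4.5 in \cite{ACDVS}, which the paper cites rather than reproves --- is the remaining case $G\leq\AGL(V,\circ)$ with $\circ\neq+$. There $\rho$ is only $\circ$-affine, $\lambda$ need not lie in $\AGL(V,\circ)$, so $\gamma$ cannot be extracted as an affine map in any structure; one must instead invoke the correspondence of \cite{CDVS06} between the regular subgroup $T_+\leq\AGL(V,\circ)$ and a commutative nilpotent ring relating the two operations, and show that for a suitable nonzero direction $a$ (e.g.\ one taken in the annihilator of that ring, where $x+a=x\circ a$ for all $x$) the set $\mathrm{Im}(\hat{\gamma}_a)$ is nevertheless forced to be an affine subspace, contradicting AC. None of this appears in your proposal, so the proof has a genuine gap: it silently assumes the socle of $\Gamma_h(\mathcal{C})$ is the standard translation group, which is the one thing that cannot be assumed.
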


However the bricks of some tb ciphers, as for example the S-Box of PRESENT, are not AC. We therefore provide the following alternative condition.

\begin{proposition}\label{gruppetto}
Let $\cC$ be a tb cipher over $V=(\mathbb{F}_2)^{mn}$, with $m\geq 3$ and $n\geq 2$. Suppose that there exists a brick $\gamma_i$ corresponding to a proper round $h$ such that
\begin{equation}\label{W's cond}
\Alt(V_i)\subseteq \langle T(V_i),\gamma_iT(V_i)\gamma_i^{-1}\rangle.
\end{equation}
If $\Gamma_h(\cC)$ is a primitive group, then it is not of affine type.
\end{proposition}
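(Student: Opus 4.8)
The plan is to argue by contradiction: assume $\Gamma_h(\cC)$ is of affine type, so that $G:=\Gamma_h(\cC)\leq \AGL(V,+)$, and show that hypothesis (\ref{W's cond}) then forces a subgroup of $\Sym(V_i)$ that is far too small to contain $\Alt(V_i)$.

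First I would exploit the affine structure to prove that $\gamma$ is linear. By $(i)$ of Lemma \ref{lambda inverse} we have $G=\langle\rho,T(V)\rangle$ with $\rho=\gamma\lambda$, so $\rho\in G\leq\AGL(V,+)$ is an affine map. Since $0\gamma=0$ (by the definition of a tb cipher) and $\lambda$ is linear, we get $0\rho=0$; an affine map fixing the origin is linear, whence $\rho\in\GL(V,+)$. As $\lambda$ is an invertible linear map, it follows that $\gamma=\rho\lambda^{-1}$ is linear as well, and consequently each brick $\gamma_i$ is a linear permutation of $V_i$.

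Next I would invoke the fact that a linear map normalizes the translation group. For $\gamma_i\in\GL(V_i)$ and any $\sigma_v\in T(V_i)$, a direct computation using the right action gives $x(\gamma_i^{-1}\sigma_v\gamma_i)=(x\gamma_i^{-1}+v)\gamma_i=x+v\gamma_i$, that is $\gamma_i^{-1}\sigma_v\gamma_i=\sigma_{v\gamma_i}$, so $\gamma_iT(V_i)\gamma_i^{-1}=T(V_i)$. Therefore $\langle T(V_i),\gamma_iT(V_i)\gamma_i^{-1}\rangle=T(V_i)$, a group of order $2^m$. The cardinality comparison then yields the contradiction: since $m\geq 3$, the group $\Alt(V_i)=\Alt(2^m)$ has order $(2^m)!/2>2^m$, so it cannot be contained in $T(V_i)$, contradicting (\ref{W's cond}). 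Hence $G$ is not of affine type.

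I do not expect a substantial obstacle here: the whole argument hinges on the single observation that affineness of $G$ propagates—through $\rho=\gamma\lambda$ together with $0\rho=0$—to linearity of each brick $\gamma_i$, after which the normalizer property of $\GL(V_i)$ on translations collapses the left-hand side of (\ref{W's cond}) to $T(V_i)$ and the order estimate concludes. The only points demanding a little care are verifying $0\rho=0$ and ensuring that $\lambda^{-1}$ is available to recover $\gamma$ from $\rho$, both of which are immediate from the definition of a tb cipher.
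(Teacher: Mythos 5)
Your argument has a genuine gap at its very first step: you read ``$\Gamma_h(\cC)$ is of affine type'' as ``$\Gamma_h(\cC)\leq\AGL(V,+)$ with respect to the cipher's native vector-space structure $+$''. That is not what affine type means in Corollary \ref{Liref}, which comes from Li's classification (Theorem \ref{Li}) of abstract primitive groups. Affine type means that the socle of $G=\Gamma_h(\cC)$ is an elementary abelian regular \emph{normal} subgroup $N$, so that $G\leq\AGL(V,\oplus)$ for the vector-space structure $\oplus$ induced on the point set by $N$; nothing forces $N=T(V)$, i.e.\ nothing forces $\oplus$ to coincide with $+$. Primitive affine groups of degree $2^d$, $d\geq 3$, do contain elementary abelian regular subgroups other than their socle --- this is precisely the phenomenon $T_\circ\neq T_+$ from \cite{CDVS06} and \cite{Ca15} around which Section \ref{sec:primitive} of the paper is built (for instance, $\AGL(V,+)$ itself contains non-normal elementary abelian regular subgroups $T_\circ$, and it is not contained in $\AGL(V,\circ)$), and it is the paper's own Proposition \ref{affine} that must cope with generating regular subgroups different from the translations of the ambient affine group. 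Consequently, under the contradiction hypothesis you may only conclude that $\rho$ is affine with respect to some possibly hidden sum $\oplus$; your chain $0\rho=0\Rightarrow\rho\in\GL(V,+)\Rightarrow\gamma=\rho\lambda^{-1}$ is $+$-linear $\Rightarrow$ each brick $\gamma_i$ is linear collapses, and no contradiction with (\ref{W's cond}) is reached. What you have actually proved is the strictly weaker statement $\Gamma_h(\cC)\not\leq\AGL(V,+)$, which is not sufficient for the paper's purposes: Theorem \ref{main2} needs case (1) of Corollary \ref{Liref} excluded in its intrinsic, identification-free sense. A telltale symptom is that your proof never genuinely uses $m\geq 3$ (your order count works for $m\geq 2$), whereas this hypothesis is essential to the proposition.

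The paper's proof avoids the issue by working with an invariant of the abstract permutation group. Using (\ref{W's cond}) and the properness of round $h$, it shows that the permutation $\tau$ acting as a $3$-cycle $\tau_i\in\Alt(V_i)$ on the $i$-th brick and as the identity on all other bricks lies in $\Gamma_h(\cC)$ (via the identities $\rho_{k'}^{-1}\rho_{k-k'}=\sigma_k$ and $\rho_{k'}\rho_{k\lambda-k'}^{-1}=\gamma\sigma_k\gamma^{-1}$, which put both $T(V)$ and $\gamma T(V)\gamma^{-1}$ inside $\Gamma_h(\cC)$). This $\tau$ moves exactly $3\cdot 2^{m(n-1)}$ points, while every non-identity element of a group of affine type of degree $2^{mn}$ moves at least $2^{mn-1}$ points (the minimal degree of $\AGL(mn,2)$), no matter how the underlying set is identified with $(\FF_2)^{mn}$; since $3\cdot 2^{m(n-1)}<2^{mn-1}$ exactly when $m\geq 3$, affine type is excluded for every hidden structure at once. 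To salvage your approach you would first have to show that $T(V)$ is normal in $\Gamma_h(\cC)$ (equivalently, that it is the socle), which is essentially as hard as the proposition itself.
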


\begin{proof}
Suppose to the contrary that $\Gamma_h(\cC)\leq\AGL(mn,2)$ (see Corollary \ref{Liref}). Let $\tau_i\in \Alt(V_i)$ be a 3-cycle and, for any  $x=(x_1,\ldots,x_{n})\in V$, define $\tau\in Sym(V)$ to be the permutation
$$x \mapsto (x_1,\ldots,x_{i-1},x_i\tau_i,x_{i+1},\ldots, x_{n}).$$
The claim will follow once it is shown that $\tau\in \Gamma_h(\cC)$. In fact, the minimal degree of $\AGL(mn,2)$ is $2^{mn-1}$, where the minimal degree is the minimum number of elements moved by a non-identity permutation (see, for instance, \cite[Section 3.3]{DixMor}). On the other hand, $\tau$ moves exactly
$3\cdot 2^{m(n-1)}$ elements. This is impossible because $3\cdot 2^{m(n-1)}<2^{mn-1}$, for any $m\geq 3$ and $n\geq 2$.

By (\ref{W's cond}), we have $\tau_i=\tau_{i_1}\tau_{i_2}\ldots \tau_{i_s}$ where each $\tau_{i_r}=\sigma_{k_i}$ or $\gamma_i \sigma_{k'_i}\gamma_i^{-1}$, for some $k_i,k'_i\in V_i$. Since $h$ is a proper round, we can consider round keys $k,k'\in V$ such that $k=(0,\ldots,0,k_i,0\ldots,0)$ and $k'=(0,\ldots,0,k'_i,0\ldots,0)$. Put $\rho_k=\rho\sigma_k$, and similarly for $k'$. Then, for any $x\in V$, we have
$$
x\gr_{k'}^{-1}\gr_{k-k'}=x( \gs_{k'}\gl^{-1}\gamma^{-1}\gamma \gl \gs_{k-k'})=x \gs_{k}
$$and
$$
x\gr_{k'}\gr_{k\lambda-k'}^{-1}=x(\gamma \gl \gs_{k'} \gs_{k\lambda-k'}\gl^{-1}\gamma^{-1})=(x\gamma\gl+k\lambda)\gl^{-1}\gamma^{-1}=x\gamma\gs_{k}\gamma^{-1}.
$$
It follows that the permutation
$$x \mapsto (x_1,\ldots,x_{i-1},x_i\tau_{i_r},x_{i+1},\ldots, x_{n})$$
belongs to $\Gamma_h(\cC)=\langle\rho, T(V)\rangle$. We conclude therefore
that $\tau\in\Gamma_h(\cC)$, as desired.
\end{proof}

\begin{remark}
{\rm By a computer check on $4$-bit S-Boxes, one can see that there exist maps which are AC but do not satisfy condition (\ref{W's cond}) and, conversely, maps satisfying condition (\ref{W's cond}) which are not AC.}
\end{remark}

In \cite[Section 7]{ACDVS}, with $\cC$ as in Theorem \ref{Th 4.4}, it has been shown that $\Gamma_h(\mathcal{C})$ cannot be a wreath product (provided that the proper round is strongly proper). In the next result we recall the proof and extend it to tb ciphers satisfying Theorem \ref{primitivity}.

\begin{proposition}\label{altnotpres}
Let $\mathcal{C}$ be a tb cipher with a strongly proper round $h$. If the hypotheses of Theorem $\ref{primitivity}$ (or Theorem $\ref{Th 4.4}$, respectively) are satisfied, then the primitive group $\Gamma_h(\mathcal{C})$ is not a wreath product.
\end{proposition}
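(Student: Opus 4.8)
The plan is to assume that $\Gamma_h(\mathcal{C})$ is a wreath product and to manufacture, inside the product action, a wall that $\lambda$ carries onto a wall, contradicting strong properness. By Corollary~\ref{Liref}(3) (we already know $\Gamma_h(\mathcal{C})$ is primitive and contains the elementary abelian regular group $T(V)$, so it is affine, alternating/symmetric, or a wreath product), I may write $\Gamma_h(\mathcal{C})=(S_1\times\dots\times S_c).O.P$ with $c>1$ acting on $V$ in product action, identifying $V$ with $\Delta^c$ for $\Delta$ of size $2^{d/c}$ and $T(V)=T_1\times\dots\times T_c$. Since each $T_j$ is regular on the $j$-th factor, the $j$-th coordinate set $\Delta_j$ is an $\FF_2$-subspace with $V=\Delta_1\oplus\dots\oplus\Delta_c$, and the block through $0$ of the partition $\mathcal{P}_j$ of $V$ into orbits of $\prod_{k\neq j}\soc(S_k)$ is the subspace $\widehat{\Delta}_j=\bigoplus_{k\neq j}\Delta_k$, the other blocks being its cosets (here the factors $\soc(S_k)\simeq\Alt(\Delta_k)$ are transitive on $\Delta_k$).

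First I would record how $\rho=\gamma\lambda$ moves this structure. The group $T(V)$ stabilises each $\mathcal{P}_j$, since a translation permutes the cosets of the subspace $\widehat{\Delta}_j$; hence the permutation of $\{\mathcal{P}_1,\dots,\mathcal{P}_c\}$ induced by $\Gamma_h(\mathcal{C})=\langle\rho,T(V)\rangle$ is generated by the permutation $\pi$ induced by $\rho$, and, as $P$ permutes the factors transitively, $\pi$ must be a $c$-cycle. Because $0\gamma\lambda=0$, the element $\rho$ sends the block $\widehat{\Delta}_j$ (the one through $0$) to $\widehat{\Delta}_{j\pi}$; intersecting these relations over the coordinates gives $\Delta_j\rho=\Delta_{j\pi}$ for all $j$, that is,
$$\Delta_j\gamma=\Delta_{j\pi}\lambda^{-1}.$$
Thus $\gamma$ carries each coordinate subspace $\Delta_j$ onto a genuine subspace $W_j:=\Delta_{j\pi}\lambda^{-1}$ of the same dimension.

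Next I would feed this into the brick structure. As in the proof of Theorem~\ref{primitivity}, for a bricklayer $\gamma$ with $0\gamma_i=0$ one has $(\Delta_j\cap V_i)\gamma_i=W_j\cap V_i$ for every brick index $i$, so each $\gamma_i$ maps the subspace $\Delta_j\cap V_i$ onto the subspace $W_j\cap V_i$. Applying strong $(r-1)$-anti-invariance (respectively strong $r$-anti-invariance, under the hypotheses of Theorem~\ref{Th 4.4}) brick by brick, I would argue that the configuration $\Delta_j\gamma=W_j$ can only be realised when each $\Delta_j$ is a sum of some of the $V_i$, i.e.\ a wall. Granting this, $\gamma$ fixes $\Delta_j$ setwise (each $\gamma_i$ permutes its own $V_i$), so $\Delta_j=W_j=\Delta_{j\pi}\lambda^{-1}$ and therefore $\Delta_j\lambda=\Delta_{j\pi}$ exhibits a wall mapped by $\lambda$ onto the wall $\Delta_{j\pi}$ (every $\Delta_k$ is a wall by the same argument), which is impossible for a strongly proper mixing layer. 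This is the sought contradiction, and it treats the two sets of hypotheses uniformly.

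The hard part will be the wall-identification step. Strong anti-invariance constrains $\Delta_j$ only through its intersections $\Delta_j\cap V_i$, so a subspace \emph{transversal} to the brick decomposition — meeting every $V_i$ trivially — satisfies those constraints vacuously and is not excluded by anti-invariance alone. To kill such transversal configurations I expect to need the differential uniformity hypothesis: projecting $\Delta_j\gamma=W_j$ onto a brick $V_i$ and varying a point of $\Delta_j$, surjectivity of the bricks forces $W_j$ (hence, after $\lambda$, the next coordinate subspace) to project onto $V_i$, and I would convert the resulting image-size estimates — used simultaneously for all $j$ around the $c$-cycle $\pi$ — into a dimension contradiction. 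Making this propagation precise, so that every transversal possibility collapses and each $\Delta_j$ is forced to be a wall, is the technical core of the proof.
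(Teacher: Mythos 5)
Your setup is in fact the paper's own: Corollary~\ref{Liref}(3) gives the product-action decomposition, your coordinate subspaces $\Delta_j$ are the paper's $W_j=0T_j$, the cyclic permutation of them by $\rho$ is obtained the same way, and the case in which every $\Delta_j$ is a wall is closed exactly as in the paper (walls are $\gamma$-invariant, so $\Delta_j\lambda=\Delta_{j\pi}$ contradicts strong properness). But the step you yourself flag as ``the technical core'' is a genuine gap, and the route you sketch for it cannot succeed as stated, because the only constraint you carry into the brick analysis is the setwise equality $\Delta_j\gamma=W_j$. That equality constrains $\gamma$ only on the $2^{mn/c}$ points of $\Delta_j$, whereas $2^r$-uniformity and strong anti-invariance bite only through derivatives $\hat{\gamma}_u(v)$ with $v$ ranging over a whole brick $V_i$, not over $\Delta_j$. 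For a $\Delta_j$ transversal to the bricks the intersection data $(\Delta_j\cap V_i)\gamma_i=W_j\cap V_i$ is, as you admit, vacuous, and no image-size estimate taken over $\Delta_j$ alone can produce a contradiction.

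The missing ingredient is already implicit in your own construction, and it is exactly what the paper uses. You proved that $\rho$ carries the partition $\mathcal{P}_k$ (the cosets of $\widehat{\Delta}_k$) onto $\mathcal{P}_{k\pi}$ for every $k$; intersecting over $k\neq j$ shows that $\rho$ carries \emph{every} coset $v+\Delta_j$ onto a coset of $\Delta_{j\pi}$, not merely the coset through $0$. Equivalently,
$$\hat{\gamma}_u(v)=(u+v)\gamma+v\gamma\in \Delta_{j\pi}\lambda^{-1}=W_j \qquad\text{for all } u\in\Delta_j,\ v\in V,$$
which is precisely relation~(\ref{lambda1}) in the paper's proof. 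With this derivative condition in hand, the non-wall case is closed by running the proof of Theorem~\ref{primitivity} verbatim with $U=\Delta_j$ and $W=\Delta_j\gamma=W_j$: if $\pi_i(U)\neq\{0\}$ but $U\cap V_i\subset V_i$, then letting $v$ range over $V_i$ in the display above, uniformity forces $U\cap V_i\neq\{0\}$ (this is exactly what eliminates your transversal configurations); strong $(r-1)$-anti-invariance then gives $\dim(U\cap V_i)=\dim(W\cap V_i)<m-r+1$, while uniformity together with the fact that derivatives of a permutation never take the value $0$ gives $\dim(W\cap V_i)\geq m-r+1$, a contradiction. (The same scheme, quoting \cite{ACDVS} for the weakly uniform/strongly $r$-anti-invariant hypotheses, settles the Theorem~\ref{Th 4.4} variant.) So the framework you chose is sound and coincides with the paper's; what must be fed into the brick-by-brick argument is the coset condition above, valid for all $v\in V$, rather than the single image condition $\Delta_j\gamma=W_j$.
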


\begin{proof}
Of course the group $\Gamma_{h}(\mathcal{C})=\langle \rho,T(V)\rangle$ is primitive, by Theorem \ref{primitivity} (or Theorem \ref{Th 4.4}, respectively). Recall that $\rho=\gamma\lambda$, where $\lambda$ is a strongly proper mixing layer. Suppose that $G=\Gamma_h(\cC)=(S_1\times\ldots\times S_c).O.P$ is the group given in (3) of Corollary \ref{Liref}. Let $S=S_1\times\ldots\times S_c$. Then $T\leq S$ and $G/S$ is a cyclic group generated by $\rho S$. Thus, arguing as in Section 7 of \cite{ACDVS} (see also \cite[Section 6]{ONAN}), we have
$$V=W_1\oplus\ldots\oplus W_c$$
where each $W_i=0T_i\subseteq 0S_i$ is a subspace of $V$ such that $W_i\rho=W_{i+1}$, for $i=1,\ldots, c-1$, and $W_{c}\rho=W_1$.
Furthermore, for any $k\in\{1,\ldots,c\}$, we obtain
\begin{equation}\label{lambda1}
\hat{\gamma}_u(v)=(u+v)\gamma+v\gamma\in W_{k+1}\lambda^{-1}
\end{equation}
for all $u\in W_k$ and $v\in V$.

Let $\pi_i$ be the projection of $V$ onto $V_i$ and take $I_k$ to be the set of all $i$ such that $\pi_i(W_k)\neq \{0\}$. Clearly $I_k\neq\emptyset$. Assume first $W_k\cap V_i=V_i$ for all $k\in\{1,\ldots,c\}$ and $i\in I_k$. Thus $W_k=\oplus_{i\in I_k} V_i$ is a wall, for any $k$. In particular, $W_k\gamma=W_k$. Since $W_k\rho=W_{k+1}$, it follows that $W_k\lambda=W_{k+1}$ which is impossible, being $\lambda$ strongly proper. We may therefore assume $W_k\cap V_j\subset V_j$, for some $k\in\{1,\ldots, c\}$ and $j\in I_k$. Now, if Theorem $\ref{primitivity}$ holds, then we get a contradiction arguing as in Theorem \ref{primitivity} with $U=W_k, W=W_k\gamma=W_{k+1}\lambda^{-1}$, and using (\ref{lambda1}) instead of Lemma \ref{lambda inverse}. Similarly, by \cite[Section 7, part (II)]{ACDVS}, we have a contradiction when Theorem $\ref{Th 4.4}$ is satisfied.
\end{proof}

Notice that in Proposition \ref{altnotpres} it is essential that the proper round is strongly proper.

\begin{example}\label{example}
{\rm
Let $V=V_1\oplus V_2\oplus V_3\oplus V_4$, where each $V_i=(\FF_2)^4$. Consider a tb cipher $\cC$ over $V$ with the inversion map in $\FF_{2^4}$ as S-Box for any round $h$, i.e. $\gamma_i:x\mapsto x^{2^4-2}$. Suppose that there is a unique mixing layer given by the following matrix
$$
\gl=\left[\begin{array}{ccccc}
0&I&0&0\\
0&0&I&0\\
0&0&0&I\\
I&0&0&0\\
\end{array}\right]
$$
where 0 and $I$ are respectively the zero and identity matrices acting on $(\FF_2)^4$. It is known that $\gamma_i$ is weakly $2$-uniform and strongly $1$-anti-invariant, namely $\gamma_i$ satisfies the hypotheses of Theorem \ref{Th 4.4}. It is also easy to verify that $\gl$ is proper. On the other hand, $\lambda$ sends $V_i$ to $V_{i+1}$, for $i=1,2,3,$ and $V_4$ to $V_1$. Thus $\lambda$ is not strongly proper. Furthermore, a computer check shows that $\Gamma_h(\mathcal{C})=\Gamma_{\infty}(\mathcal{C})$ is a wreath product.
}
\end{example}

We can now sum up our conclusions about the simplicity of $\Gamma_{\infty}(\mathcal{C})$, as follows.

\begin{theorem}[see also \cite{ACDVS}]\label{main1}
Let $\mathcal{C}$ be a tb cipher over $V=(\mathbb{F}_2)^{mn}$, with a strongly proper round $h$ such that
the corresponding bricks are AC and satisfy the hypotheses of Theorem $\ref{primitivity}$ (or Theorem $\ref{Th 4.4}$, respectively).
Then $\Gamma_{\infty}(\mathcal{C})=\Alt(V )$.
\end{theorem}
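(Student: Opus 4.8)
The plan is to assemble the machinery developed in the earlier sections and feed it into the O'Nan--Scott trichotomy of Corollary~\ref{Liref}, killing two of the three surviving cases so that only the alternating group remains. Since $\Gamma_h(\mathcal{C})\subseteq\Gamma_{\infty}(\mathcal{C})$, it suffices to pin down $\Gamma_h(\mathcal{C})$, and the whole argument is a synthesis: each ingredient has already been proved above.

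First I would record that the bricks satisfy the hypotheses of Theorem~\ref{primitivity} (or Theorem~\ref{Th 4.4}), so that $\Gamma_h(\mathcal{C})$ is primitive of degree $2^{mn}=|V|$. By part~(i) of Lemma~\ref{lambda inverse}, we have $\Gamma_h(\mathcal{C})=\langle\rho,T(V)\rangle$, so it contains the elementary abelian regular subgroup $T(V)$. Corollary~\ref{Liref} then applies and leaves exactly three possibilities: $\Gamma_h(\mathcal{C})$ is of affine type, is a wreath product, or is isomorphic to $\Alt(2^{mn})$ or $\Sym(2^{mn})$.

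Next I would eliminate the first two cases with the two structural propositions. Because every brick is AC, Proposition~\ref{Th 4.5} rules out the affine type. Because the round $h$ is strongly proper and the hypotheses of Theorem~\ref{primitivity} (or Theorem~\ref{Th 4.4}) hold, Proposition~\ref{altnotpres} rules out the wreath product. Hence $\Gamma_h(\mathcal{C})\simeq\Alt(2^{mn})$ or $\Sym(2^{mn})$; comparing cardinalities inside $\Sym(V)$ (the only index-$2$ subgroup of $\Sym(V)$ being $\Alt(V)$) upgrades this abstract isomorphism to the identification $\Gamma_h(\mathcal{C})=\Alt(V)$ or $\Sym(V)$ as a concrete subgroup. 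Finally, Lemma~\ref{even} says every element of $\Gamma_{\infty}(\mathcal{C})$, hence of $\Gamma_h(\mathcal{C})$, is even, which excludes $\Sym(V)$ and forces $\Gamma_h(\mathcal{C})=\Alt(V)$. The same lemma gives $\Gamma_{\infty}(\mathcal{C})\subseteq\Alt(V)$, and together with $\Alt(V)=\Gamma_h(\mathcal{C})\subseteq\Gamma_{\infty}(\mathcal{C})$ this yields $\Gamma_{\infty}(\mathcal{C})=\Alt(V)$.

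I do not expect a genuinely hard step here, since all the substantive work lives in Theorems~\ref{primitivity} and~\ref{Th 4.4} and in Propositions~\ref{Th 4.5} and~\ref{altnotpres}. The only points demanding a little care are bookkeeping: passing from the abstract isomorphism type delivered by Corollary~\ref{Liref} to the concrete equality $\Gamma_h(\mathcal{C})=\Alt(V)$ (handled by the order/index comparison above) and the routine reduction from $\Gamma_h$ to $\Gamma_{\infty}$ via Lemma~\ref{even}. If anything is an ``obstacle'', it is simply verifying that the hypotheses of all four cited results are simultaneously met by the cipher in question, which is immediate from the statement.
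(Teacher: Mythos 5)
Your proposal is correct and follows essentially the same route as the paper: primitivity via Theorem~\ref{primitivity} (or Theorem~\ref{Th 4.4}), the trichotomy of Corollary~\ref{Liref}, Proposition~\ref{Th 4.5} to exclude the affine case, Proposition~\ref{altnotpres} to exclude the wreath product, and Lemma~\ref{even} to discard $\Sym(V)$ and pass from $\Gamma_h$ to $\Gamma_\infty$. The paper states this more tersely (leaving the final parity and bookkeeping steps implicit), but your filled-in details are exactly the intended argument.
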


\begin{proof}
By Theorem $\ref{primitivity}$ (or Theorem \ref{Th 4.4}, respectively), the group $\Gamma_{h}(\mathcal{C})$ is primitive. Thus the claim follows applying first Corollary \ref{Liref}, and then Propositions \ref{Th 4.5} and \ref{altnotpres}.
\end{proof}

\begin{theorem}\label{main2}
Let $\mathcal{C}$ be a tb cipher over $V=(\mathbb{F}_2)^{mn}$, with $m\geq 3,n\geq 2$ and a strongly proper round $h$ such that
the corresponding bricks satisfy the hypotheses of Theorem $\ref{primitivity}$ (or Theorem $\ref{Th 4.4}$, respectively). Suppose further that one of these bricks satisfies condition $(\ref{W's cond})$ of Proposition $\ref{gruppetto}$. Then $\Gamma_{\infty}(\mathcal{C})=\Alt(V )$.
\end{theorem}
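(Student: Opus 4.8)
The plan is to follow the same strategy as in Theorem \ref{main1}, replacing the use of Proposition \ref{Th 4.5} (which rules out the affine case under the AC hypothesis) by Proposition \ref{gruppetto} (which rules it out under condition (\ref{W's cond})). Concretely, I would first invoke Theorem \ref{primitivity} (or Theorem \ref{Th 4.4}), whose hypotheses are satisfied by the bricks, to conclude that $\Gamma_h(\mathcal{C})$ is primitive on $V$. Since by part (i) of Lemma \ref{lambda inverse} the group $\Gamma_h(\mathcal{C})=\langle\rho,T(V)\rangle$ contains the elementary abelian regular subgroup $T(V)$, Corollary \ref{Liref} applies and restricts $\Gamma_h(\mathcal{C})$ to exactly three possibilities: affine type, $\Alt(V)$ or $\Sym(V)$, or a wreath product.

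Next I would eliminate the two unwanted cases. The affine case is excluded by Proposition \ref{gruppetto}: here the hypotheses $m\geq 3$ and $n\geq 2$ are exactly what that proposition requires, and the assumption that one of the bricks satisfies condition (\ref{W's cond}) is precisely its input. The wreath-product case is excluded by Proposition \ref{altnotpres}, whose hypotheses hold because the round $h$ is strongly proper and the bricks satisfy Theorem \ref{primitivity} (or Theorem \ref{Th 4.4}). Consequently $\Gamma_h(\mathcal{C})$ must be isomorphic to $\Alt(V)$ or $\Sym(V)$.

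Finally, I would pin down which of these occurs and transfer the conclusion from $\Gamma_h(\mathcal{C})$ to $\Gamma_\infty(\mathcal{C})$. By Lemma \ref{even} we have $\Gamma_\infty(\mathcal{C})\leq\Alt(V)$, and since $\Gamma_h(\mathcal{C})\leq\Gamma_\infty(\mathcal{C})$ the group $\Gamma_h(\mathcal{C})$ consists of even permutations; this rules out $\Sym(V)$ and forces $\Gamma_h(\mathcal{C})=\Alt(V)$. The chain $\Alt(V)=\Gamma_h(\mathcal{C})\leq\Gamma_\infty(\mathcal{C})\leq\Alt(V)$ then yields $\Gamma_\infty(\mathcal{C})=\Alt(V)$, as claimed.

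Since the substantive work has already been packaged into Corollary \ref{Liref} and Propositions \ref{gruppetto} and \ref{altnotpres}, I do not expect a genuine obstacle: the proof is essentially a bookkeeping assembly of these results, entirely parallel to Theorem \ref{main1}. The only point requiring care is checking that the numerical hypotheses line up — in particular that $m\geq 3,\,n\geq 2$ are available for Proposition \ref{gruppetto} (where they drive the minimal-degree estimate $3\cdot 2^{m(n-1)}<2^{mn-1}$ that actually kills the affine case) and that it is \emph{strongly} proper, not merely proper, that feeds Proposition \ref{altnotpres}. Both are built into the statement, so the assembly goes through without difficulty.
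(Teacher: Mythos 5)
Your proof is correct and takes essentially the same route as the paper: the paper's (one-line) proof of Theorem \ref{main2} consists precisely of this assembly — primitivity from Theorem \ref{primitivity} (or Theorem \ref{Th 4.4}), the case restriction from Corollary \ref{Liref}, Proposition \ref{gruppetto} to exclude the affine case, Proposition \ref{altnotpres} to exclude the wreath product, and Lemma \ref{even} to select $\Alt(V)$ rather than $\Sym(V)$. The only difference is that you spell out the final even-permutation step and the passage from $\Gamma_h(\mathcal{C})$ to $\Gamma_\infty(\mathcal{C})$, which the paper leaves implicit by referring back to Theorem \ref{main1}.
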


\begin{proof}
As in Theorem $\ref{main1}$, applying Proposition \ref{gruppetto} instead of Proposition~\ref{Th 4.5}.
\end{proof}

\begin{corollary}\label{cormain2}
Let $\mathcal{C}$ be a tb cipher over $V=(\mathbb{F}_2)^{mn}$ with a strongly proper round $h$ such that
the corresponding bricks satisfy the hypotheses of Theorem $\ref{primitivity}$ (or Theorem $\ref{Th 4.4}$, respectively). Suppose $m=3,4$ or $5$, and $n\geq 2$. Then $\Gamma_{\infty}(\mathcal{C})=\Alt(V )$.
\end{corollary}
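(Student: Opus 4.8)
The plan is to deduce the corollary directly from Theorem~\ref{main2} by showing that, in the range $m\in\{3,4,5\}$, the hypotheses of Theorem~\ref{primitivity} (resp.\ Theorem~\ref{Th 4.4}) already force some brick to satisfy condition~(\ref{W's cond}) of Proposition~\ref{gruppetto}. Since Theorem~\ref{main2} bundles together primitivity (from Theorem~\ref{primitivity} or \ref{Th 4.4}), the exclusion of wreath products (Proposition~\ref{altnotpres}, where strong properness is used), and the exclusion of the affine type (via Proposition~\ref{gruppetto}), the whole statement reduces to verifying~(\ref{W's cond}). So the work is concentrated entirely in producing one brick for which that inclusion holds.

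To this end I would fix a brick $\gamma_i\colon V_i\to V_i$ with $V_i=(\FF_2)^m$ corresponding to the proper round and set $G_i=\langle T(V_i),\gamma_iT(V_i)\gamma_i^{-1}\rangle$. Here $T(V_i)$ is elementary abelian regular and $\gamma_iT(V_i)\gamma_i^{-1}$ is a conjugate of it, so $G_i$ is generated by two elementary abelian regular subgroups; moreover all these generators are even permutations, hence $G_i\leq\Alt(V_i)$. Granting that $G_i$ is \emph{primitive} on $V_i$, Proposition~\ref{affine} applies with $d=m\in\{3,4,5\}$ and yields $G_i\simeq\Alt(V_i)$ or $\Sym(V_i)$; by evenness this forces $G_i=\Alt(V_i)$, which is exactly condition~(\ref{W's cond}). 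Theorem~\ref{main2} (whose remaining hypotheses $m\geq3$, $n\geq2$, strong properness, and the Theorem~\ref{primitivity}/\ref{Th 4.4} conditions are all in force) then gives $\Gamma_\infty(\mathcal C)=\Alt(V)$.

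The hard part, and the step I expect to carry the most weight, is the primitivity of $G_i$, which I would establish by rerunning the argument of Theorem~\ref{primitivity} at the single-brick level. Because $T(V_i)\leq G_i$ is regular and abelian, every $G_i$-block system is of the form $\{U+v\mid v\in V_i\}$ for a subspace $U$ with $0<\dim U<m$. I would exploit invariance under the conjugate translation group as follows: the element $g=\gamma_i\sigma_{v\gamma_i}\gamma_i^{-1}\in G_i$ sends $0$ to $v$, so it must map the block $U$ onto the block $U+v$; computing $(Ug)\gamma_i=U\gamma_i+v\gamma_i$ then gives $(U+v)\gamma_i=U\gamma_i+v\gamma_i$ for every $v\in V_i$. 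Taking $v\in U$ shows that $W:=U\gamma_i$ is a subspace, and for arbitrary $v$ it gives $\mathrm{Im}(\hat{\gamma_i}_u)\subseteq W$ for every $u\in U\backslash\{0\}$. From here the contradiction is produced exactly as in Theorem~\ref{primitivity}: strong $(r-1)$-anti-invariance forces $\dim W<m-r+1$ (as $U\neq V_i$), while $2^r$-uniformity together with $0\notin\mathrm{Im}(\hat{\gamma_i}_u)$ forces $|W|\geq2^{m-r}+1$ and hence $\dim W\geq m-r+1$ — impossible. (Under the Theorem~\ref{Th 4.4} hypotheses one argues identically, using weak $2^r$-uniformity and strong $r$-anti-invariance to obtain $\dim W\geq m-r$ against $\dim W<m-r$.) Thus $G_i$ admits no nontrivial block system and is primitive.

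Finally I would confirm that the numerology closes up: Proposition~\ref{gruppetto} and Theorem~\ref{main2} demand $m\geq3$ and $n\geq2$, whereas Proposition~\ref{affine} demands $d=m\leq5$, so $m\in\{3,4,5\}$ is precisely the overlap making both tools available at once. No new estimate beyond those already in Theorem~\ref{primitivity} is required; the only genuine conceptual point is that Proposition~\ref{affine} must be applied to the brick-level group $G_i$ generated by the two translation groups, rather than to the full round group $\Gamma_h(\mathcal C)$, whose degree $2^{mn}$ with $mn\geq6$ lies outside the reach of that proposition.
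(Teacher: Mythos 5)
Your proposal is correct and follows essentially the same route as the paper: both reduce to verifying condition~(\ref{W's cond}) for a brick via Theorem~\ref{main2}, form the brick-level group $\langle T(V_i),\gamma_iT(V_i)\gamma_i^{-1}\rangle$ generated by two elementary abelian regular subgroups, invoke Proposition~\ref{affine} (with Corollary~\ref{Liref}), and prove primitivity of that group by the same block-system computation showing $U\gamma_i$ is a subspace containing $\mathrm{Im}(\hat{\gamma_i}_u)\cup\{0\}$, contradicted by the uniformity and anti-invariance bounds. Your explicit element $\gamma_i\sigma_{v\gamma_i}\gamma_i^{-1}$ and the evenness remark forcing $G_i=\Alt(V_i)$ are only cosmetic variations on the paper's argument.
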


\begin{proof}
Let $\gamma_i$ be any brick in the round $h$. Put $V_i=(\mathbb{F}_2)^{m}$ and
$$G=\langle \sigma_k, \gamma_i\sigma_k\gamma_i^{-1} \mid k\in V_i\rangle.$$
According to Theorem \ref{main2}, it is enough to prove that $\Alt(V_i)\subseteq G$. Actually, by Corollary \ref{Liref} and Proposition \ref{affine}, it suffices to show that $G$ is primitive. In fact, $G$ is a subgroup of $\Sym(V_i)$ generated by two elementary abelian regular subgroups, namely $T(V_i)$ and $\gamma_i T(V_i)\gamma_i^{-1}$.

Suppose to the contrary that $G$ is imprimitive. Let $U$ be a nontrivial proper subspace of $V_i$ such that $\{U+v\mid v\in V_i\}$ is a block system of $G$ on $V_i$. Then $(U+v)\gamma_i\sigma_k\gamma_i^{-1}=U+w$, with $v,w\in V_i$. It follows that $v\gamma_i\sigma_k\gamma_i^{-1}\in U+w$ and so $(U+v)\gamma_i\sigma_k\gamma_i^{-1}=U+v\gamma_i\sigma_k\gamma_i^{-1}=U+(v\gamma_i+k)\gamma_i^{-1}$. Hence,
$$(U+v)\gamma_i\sigma_k\gamma_i^{-1}+(v\gamma_i+k)\gamma_i^{-1}=U.$$
Now, if $v=0$ and $k\in U\gamma_i$, then $U\gamma_i\sigma_k\gamma_i^{-1}=U$ because $0\gamma_i=0$. Hence $U\gamma_i+k=U\gamma_i$, which implies that $U\gamma_i$ is a subspace of $V_i$. On the other hand, if $k=v\gamma_i$, we have
$((U+v)\gamma_i+v\gamma_i)\gamma_i^{-1}=U$.
Thus $\Im(\hat{\gamma_i}_{u})\cup\{0\}\subseteq U\gamma_i$, for any $u\in U\backslash\{0\}$.
However $0\notin{\rm Im}(\hat{\gamma_j}_{u})$ and $\gamma_i$ is $2^r$-uniform, so that $|U\gamma_i|\ge 2^{m-r}+1$. In particular $\dim(U\gamma_i)\ge m-r+1$, in contradiction to the assumption that $\gamma_i$ is strongly $(r-1)$-anti-invariant.
\end{proof}

The next corollary is an immediate consequence of Corollary \ref{cormain2}, Remark \ref{present} and Remark \ref{recprin}.

\begin{corollary}
The round functions of PRESENT, RECTANGLE and PRINTcipher generate the alternating group.
\end{corollary}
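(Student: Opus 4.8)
The plan is to verify that each of the three ciphers meets, verbatim, the hypotheses of Corollary \ref{cormain2}, after which the conclusion $\Gamma_{\infty}(\mathcal{C}) = \Alt(V)$ is automatic. In other words, the real content has already been proved in Corollary \ref{cormain2}, and all that is left is bookkeeping: identifying the block parameters $m,n$ for each cipher and invoking the cryptographic verifications recorded in Remarks \ref{present} and \ref{recprin}.

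First I would fix the parameters. For PRESENT and RECTANGLE the message space is $V = (\FF_2)^{64}$ partitioned into $4$-bit bricks, so $m = 4$ and $n = 16$; for PRINTcipher the S-Box is $3$-bit, so $m = 3$ (with $n = 16$ or $32$ according to the block length). In every case $m \in \{3,4,5\}$ and $n \geq 2$, which is exactly the range of parameters demanded by Corollary \ref{cormain2}.

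Next I would confirm the structural hypothesis. Each cipher is translation based with a strongly proper mixing layer, established by the computer checks recorded in Remarks \ref{present} and \ref{recprin}; together with the surjectivity of the round-key map, this makes the relevant round strongly proper in the sense of Section \ref{sec:tbciphers}. It then remains to check that the bricks satisfy the hypotheses of Theorem \ref{primitivity}, which for these S-Boxes holds with $r = 2$ (differential $4$-uniformity together with strong $1$-anti-invariance): this is the content of Remark \ref{present} for the PRESENT S-Box and of Remark \ref{recprin} for the RECTANGLE and PRINTcipher S-Boxes.

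With all hypotheses in place, applying Corollary \ref{cormain2} to each cipher yields $\Gamma_{\infty}(\mathcal{C}) = \Alt(V)$, as claimed. Since every conceptual ingredient lives inside Corollary \ref{cormain2}, the only genuine obstacle at this stage is the correctness of the underlying computer verifications, namely the strong properness of each mixing layer and the differential-uniformity and strong-anti-invariance data of each S-Box. These are precisely the facts supplied by Remarks \ref{present} and \ref{recprin}, so the corollary is indeed immediate once those checks are granted.
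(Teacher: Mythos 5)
Your proposal is correct and follows exactly the paper's route: the paper likewise proves this corollary by directly invoking Corollary \ref{cormain2} together with the computational facts in Remarks \ref{present} and \ref{recprin}, and your write-up merely makes the bookkeeping (the values of $m$, $n$, and $r=2$) explicit. The only point worth noting is that Remark \ref{present} literally records \emph{weak} $4$-uniformity for the PRESENT S-Box while Theorem \ref{primitivity} with $r=2$ needs genuine differential $4$-uniformity; this is a known property of the PRESENT S-Box and the paper itself glosses over the same distinction in Remark \ref{recprin}, so it is not a gap you introduced.
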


\section{Final remarks}\label{sec:finrem}

Vectorial Boolean functions used as S-boxes in block ciphers must have low uniformity to prevent linear cryptanalysis (see \cite{lincrit}), differential cryptanalysis (see \cite{crittoanalisi1,crittoanalisi2}) and high non-linearity. Differentially $2$-uniform functions, also called \emph{Almost Perfect Non-linear (APN) functions}, are optimal. The most common dimensions used for S-boxes are even, often powers of $2$. Unfortunately, no 4-bit APN permutation exists and no APN permutation on $(\FF_2)^m$ with $m$ even was known until in \cite{apnseidim} it has been constructed a $6$-bit APN permutation (still now this is the only example of APN permutation in even dimension). For this reason, the permutations used as S-boxes in block ciphers are $4$-uniform.

The following computational result has been obtained looking at all the affine equivalence classes of $4$-bit S-Boxes.

\begin{fact}\label{4uni-1anti}
Let $\gamma:(\mathbb F_2)^4 \rightarrow (\mathbb F_2)^4$ be a $4$-bit S-Box. If $\gamma$ is $4$-uniform, then it is strongly $1$-anti-invariant.
\end{fact}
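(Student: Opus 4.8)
The plan is to first recast strongly $1$-anti-invariance in purely combinatorial terms and then reduce the assertion to a finite verification that can be certified by computer. Since $m=4$ and $r=1$, the threshold $m-r=3$ shows that the only way strong $1$-anti-invariance can fail is for a subspace $U$ of dimension exactly $3$ to have $\gamma(U)$ again a subspace: dimensions $0,1,2$ are always admissible, while dimension $4$ forces $U=V$. (If $\gamma(0)\ne 0$ I first replace $\gamma$ by $\sigma_{0\gamma}\gamma$, which fixes $0$ and has the same differential uniformity, exactly as in Remark~\ref{present}.) Thus I would prove the contrapositive: if some hyperplane $U$ of $(\FF_2)^4$ satisfies that $\gamma(U)$ is a hyperplane, then $\gamma$ is not $4$-uniform.

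Next I would exploit that both hypothesis and conclusion are invariant under the equivalence $\gamma\mapsto A\gamma B$, with $A,B\in\GL(4,2)$. Differential uniformity is preserved because $\widehat{A\gamma B}_u(x)=A\,\hat{\gamma}_{Bu}(Bx)$, and the property ``some hyperplane maps to a hyperplane'' is preserved because $A,B$ permute the hyperplanes through $0$ and carry subspaces to subspaces. Hence it suffices to check the implication on one representative fixing $0$ of each affine-equivalence class of permutations of $(\FF_2)^4$; there are only finitely many such classes, and they are completely enumerated. Concretely, for each representative that is $4$-uniform one runs over the $15$ hyperplanes $U$ and tests whether $\gamma(U)$ is closed under addition; the Fact is the assertion that this never happens.

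A further structural reduction that I would record, both to shrink the search and to explain why the statement is nontrivial, is the following normal form. After conjugating so that the offending hyperplane is $H=\langle e_1,e_2,e_3\rangle$ with $\gamma(H)=H$, the last output coordinate of $\gamma$ equals $x_4$, so $\gamma(x',0)=(\alpha(x'),0)$ and $\gamma(x',1)=(\beta(x'),1)$ for permutations $\alpha,\beta$ of $(\FF_2)^3$ with $\alpha(0)=0$. Computing the derivatives along $u=(u',0)$ and $u=(u',1)$ shows that $4$-uniformity of $\gamma$ is equivalent to: (I) $N_\alpha(u',w)+N_\beta(u',w)\le 4$ for all $u'\ne 0$ and all $w$, where $N_\alpha(u',w)=|\{y:\alpha(y+u')+\alpha(y)=w\}|$; and (II) the map $y\mapsto \alpha(y+u')+\beta(y)$ is at most $2$-to-$1$ for every $u'\in(\FF_2)^3$ (using that, for $u'\ne 0$, the two cross-counts coincide after the shift $y\mapsto y+u'$). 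The Fact is then equivalent to the nonexistence of such a pair $(\alpha,\beta)$, and checking this is a very small finite problem.

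The main obstacle is that there is no cheap a priori reason forbidding condition (II): each individual derivative constraint is satisfiable, and a double count of $\sum_{u',w}D(u',w)^2$, with $D(u',w)=|\{y:\alpha(y+u')+\beta(y)=w\}|$, only yields $\sum_{a\ne 0,\,c}N_\alpha(a,c)N_\beta(a,c)\le 64$, which for APN choices of $\alpha,\beta$ admits a lower bound of $28$ and hence no contradiction. So the obstruction to a hyperplane-to-hyperplane map is a genuinely global interaction of all the derivatives of $\gamma$, invisible from any single one; this is precisely why the cleanest route is the finite enumeration of affine classes, carried out on a computer (e.g.\ with MAGMA~\cite{MAGMA}) once the reductions above have cut the problem down to a handful of cases.
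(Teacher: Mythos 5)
Your overall strategy coincides with the paper's: Fact~\ref{4uni-1anti} is not proved in the text at all, but is asserted as a computer verification carried out over the affine equivalence classes of $4$-bit S-Boxes, which is exactly the enumeration you propose. However, as written, your reduction to that enumeration has a genuine gap. You establish invariance of the hypothesis and of the conclusion only under \emph{linear} equivalence $\gamma\mapsto A\gamma B$ with $A,B\in\GL(4,2)$, and then conclude (``Hence it suffices\dots'') that one may test a single $0$-fixing representative of each \emph{affine} equivalence class. That inference does not follow: an affine class is a union of many linear classes, and two $0$-fixing members of the same affine class are related, in general, by a linear equivalence composed with a shift $\gamma\mapsto \gamma(\cdot+a)+\gamma(a)$. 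Under such a shift, the failure ``some hyperplane is mapped onto a hyperplane'' for the shifted map corresponds, in terms of the chosen representative, to ``some \emph{coset} of a hyperplane is mapped onto a coset of a hyperplane'', which your test of the $15$ linear hyperplanes does not detect. So, a priori, a representative could pass your test while another $0$-fixing member of its affine class is $4$-uniform and not strongly $1$-anti-invariant; since the Fact quantifies over all S-Boxes (normalized as in Remark~\ref{present}), the check as designed is incomplete.

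The gap is easy to close, in at least three ways. (1) For each class representative, test all $30$ affine hyperplanes, i.e.\ verify the affine-invariant property ``no coset of a hyperplane is mapped onto a coset of a hyperplane''; its failure for the representative is equivalent to failure of strong $1$-anti-invariance for some $0$-fixing member of the class, so this certifies the Fact. (2) Invoke Proposition~\ref{nonlinanti} of the paper: for a $0$-fixing permutation, strong $1$-anti-invariance is equivalent to $\mathcal{N}(\gamma)\neq 0$, and non-linearity \emph{is} affine-invariant; hence the property you need is in fact constant on the $0$-fixing members of an affine class --- but this is precisely the missing step and must be stated. (3) Promote your normal form to the primary verification: since the normalization $U=W=\langle e_1,e_2,e_3\rangle$ uses only linear maps, the Fact is equivalent to the nonexistence of a pair $(\alpha,\beta)$ of permutations of $(\FF_2)^3$ satisfying your conditions (I) and (II), and that enumeration is sound and complete with no reference to affine classes at all. (Your derivation of (I) and (II) and the double-counting remark are correct; one minor slip: the normalization fixing $0$ is the output translation $\gamma\sigma_{0\gamma}$, not $\sigma_{0\gamma}\gamma$.)
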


It would be interesting to give a direct proof of Fact \ref{4uni-1anti}. Indeed, by Theorem \ref{primitivity} and Corollary \ref{cormain2}, this implies that the group generated by the round functions of a tb cipher with a strongly proper round and bricks of dimension 4 is the alternating group, provided that the bricks of the strongly proper round are $4$-uniform.

Next we relate the strongly 1-anti-invariance with the non-linearity, that is another algebraic property of vectorial Boolean functions.
More precisely, we will show that, for a permutation, the strongly 1-anti-invariance is equivalent to the fact that the permutation has non-linearity greater than $0$.

For any $m\geq 1$, let $f$ and $g$ be $m$-variable Boolean functions. The Hamming distance between $f$ and $g$ is
$$\mathrm{d}(f,g)=|\{x\in (\FF_2)^m\mid f(x)\neq g(x)\}|.$$
Recall also that $f$ can be represented as a multivariate polynomial over $\FF_2$. This polynomial is called the Algebraic Normal Form (ANF) of $f$ (see for instance \cite{carlet}) and its degree is the (algebraic) degree of $f$ $\deg(f)$.
The affine functions are Boolean functions of degree at most $1$, and they form a set which we denote by $\cA_m$.
The {\it non-linearity} of $f$ is then given by
$$\mathcal{N}(f)={\rm min}\{{\rm d}(f,\alpha)\mid \alpha \in \cA_m\}.$$
For a vectorial Boolean function $f:(\mathbb{F}_2)^m \to (\mathbb{F}_2)^m$, we denote by $_vf$ the component $\sum_{i=1}^m v_if_i$ of $f$, where $f_1,\dots,f_m$ are the coordinate functions of $f$, for all $v\in(\mathbb{F}_2)^m\backslash\{0\}$. Thus, we have
$$
\mathcal{N}(f)={\rm min}\{\mathcal N(_vf)\mid v\in(\mathbb{F}_2)^m\backslash\{0\}\}.
$$

The following result is Proposition 5 of \cite{Ca16}.

\begin{proposition}\label{prop:Va}
Let $f:(\mathbb{F}_2)^m\rightarrow(\mathbb{F}_2)^m$ be a vectorial Boolean function and $a\in (\mathbb{F}_2)^m\backslash\{0\}$. Let $V_a$ be the vector space $\{v\,\in(\mathbb{F}_2)^m\,|\,\deg(_v\hat{f}_a)=0\}$. Then $af+V_a^{\perp}$ is the smallest affine subspace of $(\mathbb{F}_2)^m$ containing $\Im(\hat{f}_a)$.
\end{proposition}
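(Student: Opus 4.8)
The plan is to exploit the duality between a nonempty subset $S\subseteq(\FF_2)^m$ and the linear forms that are constant on $S$. Take $S=\Im(\hat{f}_a)$, which is nonempty because $\hat{f}_a(0)=f(a)+f(0)\in S$; this point is the base point denoted $af$ in the statement (it equals $f(a)$ whenever $f(0)=0$). Recall the elementary fact that the smallest affine subspace containing a nonempty $S$ is $s_0+D$, where $s_0$ is any point of $S$ and $D=\langle s-s_0\mid s\in S\rangle$ is the linear span of the differences; this subspace does not depend on the choice of $s_0$, since for any affine subspace $q+L'\supseteq S$ one has $q+L'=s_0+L'\supseteq s_0+D$. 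Thus the whole problem reduces to identifying the direction space $D$ with $V_a^{\perp}$, equivalently to proving $D^{\perp}=V_a$.

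The first step is to rewrite the component function as an inner product. For every $v\in(\FF_2)^m$ one has $_v\hat{f}_a(x)=\langle v,\hat{f}_a(x)\rangle$, where $\langle\,,\rangle$ denotes the standard bilinear form on $(\FF_2)^m$. As $x$ runs through $(\FF_2)^m$ the vector $\hat{f}_a(x)$ runs through $S$, so $_v\hat{f}_a$ is a constant Boolean function exactly when $\langle v,\cdot\rangle$ is constant on $S$. By definition $V_a$ is the set of those $v$ for which $_v\hat{f}_a$ is constant, i.e. has degree $\le 0$; at this point one has to be a little careful with the degree convention, making sure that the directions with $\langle v,\cdot\rangle\equiv 0$ on $S$ (the value-$0$ constants, including $v=0$) are counted, so that $V_a$ is genuinely a subspace.

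The second step is the orthogonality computation. Fixing $s_0=\hat{f}_a(0)\in S$, a form $\langle v,\cdot\rangle$ is constant on $S$ if and only if $\langle v,s-s_0\rangle=0$ for all $s\in S$, that is, $v\in D^{\perp}$. Combining this with the previous step gives $V_a=D^{\perp}$. Since $(\FF_2)^m$ is finite-dimensional and the bilinear form is nondegenerate, taking orthogonals again yields $D=(D^{\perp})^{\perp}=V_a^{\perp}$. Hence the smallest affine subspace containing $\Im(\hat{f}_a)$ is $s_0+V_a^{\perp}=af+V_a^{\perp}$, as claimed.

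I expect the only genuinely delicate point to be the degree-versus-constant bookkeeping in the first step: one must ensure that $V_a$ captures every direction on which $\langle v,\cdot\rangle$ is constant over $S$ --- both the value-$0$ and the value-$1$ cases --- so that $V_a$ is a subspace and the clean orthogonality argument of the second step applies. The remaining ingredients, namely the description of an affine hull as $s_0+D$ and the identity $(D^{\perp})^{\perp}=D$, are routine linear algebra.
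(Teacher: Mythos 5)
Your proof is correct. Note first that the paper contains no proof of this statement to compare against: it is quoted verbatim as Proposition 5 of \cite{Ca16}, so your write-up actually supplies an argument the paper only imports. The argument itself is the natural one and every step is sound: the affine hull of a nonempty $S\subseteq(\FF_2)^m$ is $s_0+D$ with $s_0\in S$ and $D$ the span of the differences; the identification $V_a=D^{\perp}$, via $_v\hat{f}_a(x)=\langle v,\hat{f}_a(x)\rangle$ being constant in $x$ exactly when the form $\langle v,\cdot\rangle$ is constant on $S=\Im(\hat{f}_a)$; and the double orthogonal $(D^{\perp})^{\perp}=D$, which is legitimate because the standard bilinear form on $(\FF_2)^m$ is nondegenerate (even though it is not anisotropic, nondegeneracy is all that is needed for the dimension count $\dim D^{\perp}=m-\dim D$).

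Two remarks on the points you flagged, both of which you resolve in the right way. First, the degree convention: $\deg(_v\hat{f}_a)=0$ must indeed be read as ``$_v\hat{f}_a$ is constant,'' with the identically zero component included, since otherwise $0\notin V_a$ and $V_a$ would not be the vector space the statement asserts it is. Second, the base point: what your argument literally produces is that the affine hull is $\hat{f}_a(0)+V_a^{\perp}$, where $\hat{f}_a(0)=f(a)+f(0)$. With the paper's right-action convention $af=f(a)$, the statement as printed is therefore only correct when $f(0)=0$ (a constant nonzero $f$ gives a counterexample to the literal wording: the hull is $\{0\}$ while $af+V_a^{\perp}=\{f(a)\}$); equivalently, one must read $af$ as the representative $\hat{f}_a(0)$, as you do. This discrepancy is harmless in context, because the only place the proposition is invoked is Proposition \ref{nonlinanti}, where the function is a permutation $\gamma$ with $0\gamma=0$, so $\hat{\gamma}_u(0)=u\gamma$ and the two readings coincide. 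Your attention to exactly these two points is what makes the proof airtight.
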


We are now ready to prove the above announced result.

\begin{proposition}\label{nonlinanti}
Let $m>1$ and let $\gamma$ be a permutation of $(\mathbb{F}_2)^m$ such that $0\gamma=0$. Then $\mathcal N(\gamma)\ne 0$ if and only if $\gamma$ is strongly 1-anti-invariant.
\end{proposition}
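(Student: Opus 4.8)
The plan is to translate both sides of the equivalence into statements about the component functions ${}_{v}\gamma=\langle v,\gamma(\cdot)\rangle$ and about hyperplanes of $(\FF_2)^m$, and then to match them directly.

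First I would unwind strong $1$-anti-invariance. Because $\gamma$ is a permutation, any equality $\gamma(U)=W$ of subspaces automatically forces $\dim U=\dim W$, and the subcase $\dim U=\dim W=m$ means $U=W=(\FF_2)^m$. Hence strong $1$-anti-invariance fails \emph{precisely} when there is a hyperplane $U$ (a subspace of dimension $m-1$, which exists since $m>1$) whose image $\gamma(U)$ is again a subspace. So it suffices to prove that $\gamma$ maps some hyperplane onto a subspace if and only if $\mathcal{N}(\gamma)=0$.

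Second I would record the meaning of $\mathcal{N}(\gamma)=0$. By the displayed formula $\mathcal{N}(\gamma)=\min\{\mathcal{N}({}_v\gamma)\mid v\neq 0\}$, we have $\mathcal{N}(\gamma)=0$ iff some component ${}_v\gamma$ with $v\neq 0$ lies in $\cA_m$, i.e.\ is affine. Evaluating at $0$ and using $0\gamma=0$ kills the constant term, so such a component is in fact a linear form $x\mapsto\langle a,x\rangle$; moreover $a\neq 0$, since $a=0$ would make ${}_v\gamma\equiv 0$ and then surjectivity of $\gamma$ would force $v=0$. I would also fix the dictionary between covectors and hyperplanes: every hyperplane is $\ker\langle a,\cdot\rangle$ for a unique nonzero $a$, and conversely.

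The crux, and the step I expect to be the \emph{main obstacle}, is the bridge between ``$\gamma$ maps a hyperplane to a subspace'' and ``a component of $\gamma$ is linear.'' Writing $U=\ker\langle a,\cdot\rangle$ and $W=\ker\langle b,\cdot\rangle$ with $a,b\neq 0$, I claim $\gamma(U)=W$ is equivalent to the identity ${}_b\gamma=\langle a,\cdot\rangle$ of Boolean functions. The subtlety is that mere inclusion $\gamma(U)\subseteq W$ does not pin down ${}_b\gamma$; one must use the full equality. Since $\gamma$ is a bijection and $|U|=|W|=2^{m-1}$, the inclusion $\gamma(U)\subseteq W$ is automatically an equality and simultaneously $\gamma(U^{c})=W^{c}$; hence ${}_b\gamma$ vanishes exactly on $U$, so ${}_b\gamma$ and $\langle a,\cdot\rangle$ have the same zero set, and two $\FF_2$-valued functions with the same zero set coincide. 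Conversely the identity ${}_b\gamma=\langle a,\cdot\rangle$ immediately gives $\gamma(U)\subseteq W$, hence $\gamma(U)=W$. Assembling the two directions then finishes the proof: if $\gamma$ is not strongly $1$-anti-invariant, the hyperplane from the first step yields a nonzero $b$ with ${}_b\gamma$ linear, whence $\mathcal{N}(\gamma)=0$; and if $\mathcal{N}(\gamma)=0$, the second step produces $v,a\neq 0$ with ${}_v\gamma=\langle a,\cdot\rangle$, so taking $U=\ker\langle a,\cdot\rangle$ and $W=\ker\langle v,\cdot\rangle$ the bridge gives $\gamma(U)=W$, exhibiting a hyperplane mapped to a subspace. (Proposition~\ref{prop:Va} offers an alternative, derivative-based route to the same dictionary, but the direct argument above avoids it.)
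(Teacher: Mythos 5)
Your proof is correct, and it takes a genuinely different route from the paper's. The paper handles the two directions asymmetrically: for the forward implication it normalizes (using affine invariance of non-linearity) so that the offending hyperplane satisfies $U\gamma=U$ with $U=\langle e_1,\ldots,e_{m-1}\rangle$, shows $\Im(\hat{\gamma}_u)\subseteq U$ for all nonzero $u\in U$, and then invokes Proposition~\ref{prop:Va} (imported from \cite{Ca16}) together with an ANF-degree argument to force the coordinate function $\gamma_m$ to have degree at most $1$; the converse is a separate short verification after normalizing $\gamma_m=x_m$. You instead prove a single self-contained dictionary lemma --- for nonzero $a,b$, with $U=\ker\langle a,\cdot\rangle$ and $W=\ker\langle b,\cdot\rangle$, one has $\gamma(U)=W$ if and only if ${}_b\gamma=\langle a,\cdot\rangle$ --- whose proof rests on the observation that a bijection carrying $U$ onto $W$ also carries the complement onto the complement, so ${}_b\gamma$ and $\langle a,\cdot\rangle$ have the same zero set and therefore coincide as $\{0,1\}$-valued functions; both implications of the proposition then fall out symmetrically from this one lemma (your handling of the constant term via $0\gamma=0$ and of the degenerate case $a=0$ via surjectivity is also correct). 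Your argument is more elementary: it avoids derivatives, ANF degrees, the normalization steps, and the external Proposition~\ref{prop:Va} entirely. What the paper's derivative-based route buys is coherence with the differential machinery ($\Im(\hat{f}_a)$, uniformity) used throughout the paper, and, via Proposition~\ref{prop:Va}, slightly finer structural information (the affine hull of each $\Im(\hat{\gamma}_u)$), which your set-theoretic argument neither needs nor produces.
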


\begin{proof}
Let $\mathcal N(\gamma)\ne 0$ and suppose that $\gamma$ is not strongly 1-anti-invariant. Then there exist $U,\,W<(\mathbb{F}_2)^m$ such that $\dim(U)=\dim(W)=m-1$ and $U\gamma=W$. As the non-linearity of a vectorial Boolean function is affine invariant (see for instance \cite{carlet}), without loss of generality, we may assume $U=W=\langle e_1,\ldots,e_{m-1}\rangle$, where $e_i$ denotes the vector with $1$ in the $i$-th position and $0$ elsewhere. Thus $(\mathbb{F}_2)^m=U\cupdot (U+e_m)$, in other words $U$ and $(U+e_m)$ give a partition of $(\mathbb{F}_2)^m$.
Let $u \in U$ with $u\neq 0$. Since $U\gamma=U$, we have $(U+e_m)\gamma=U+e_m$ which implies that $(u+v)\gamma+v\gamma \in U$, for all $v\in (\mathbb{F}_2)^m$. Hence $\Im(\hat{\gamma}_u)\subset U$ and, by Proposition \ref{prop:Va}, we have $V_u^\perp+u\gamma\subseteq U$. But $u\gamma\in U$, so that $V_u^\perp\subseteq U$ and $U^\perp=\langle e_m\rangle\subseteq V_u$. It follows that $\langle e_m\rangle\subseteq V_{e_i}$ for all $e_i$ with $1\le i\le m-1$, i.e., the derivate $(\widehat{{\gamma}_{m}})_{e_i}$ is constant. This implies that in the ANF representation of $\gamma_{m}$ the variable $x_i$ could appear only in the part of degree $1$, that is $\gamma_{m}$ has at most degree $1$, in contrast to the fact that $\cN(\gamma)\ne 0$.

Conversely, let $\gamma$ be  strongly 1-anti-invariant and suppose $\cN(\gamma)=0$. As before, let $U=\langle e_1,\ldots,e_{m-1}\rangle$ and $(\mathbb{F}_2)^m=U\cupdot (U+e_m)$. Without loss of generality, we may assume $\gamma_{m}=x_m$. Since $\gamma$ is a permutation, it is easy to verify that $U\gamma=U$, which is a contradiction.
\end{proof}

By the previous proposition an S-Box $\gamma$ is strongly 1-anti-invariant if $\gamma$ does not have any affine component.

In general, the minimum request to prevent linear and differential attacks on a block cipher is the 4-differential uniformity and the nonzero non-linearity of S-Boxes. The lightweight translation based ciphers are often designed using $m$-bit S-Boxes with $m<5$, in order to have a low implementation complexity in hardware. So, for this type of ciphers, by Corollary \ref{cormain2} and Proposition \ref{nonlinanti}, the choice of a strongly proper mixing layer allows to avoid some attacks based on the order or the imprimitivity of the group generated by the round functions.

\subsection*{Acknowledgements} The authors would like to thank Ralph Wernsdorf for his useful comments after reading an earlier version of the manuscript.

\end{document}